\documentclass{aims}
\usepackage{amsmath}
\usepackage{amssymb}
\usepackage{paralist}
\usepackage{graphics} 
\usepackage{epsfig} 
\usepackage{graphicx}  \usepackage{epstopdf}
\usepackage{mathrsfs}
\usepackage{cancel}
\usepackage{xcolor}
\usepackage{tikz-cd}
\usepackage{caption}
\usepackage{subcaption}

 \usepackage[colorlinks=true]{hyperref}
\hypersetup{urlcolor=blue, citecolor=red}

  \textheight=8.2 true in
   \textwidth=5.0 true in
    \topmargin 30pt
     \setcounter{page}{1}



\newtheorem{theorem}{Theorem}[section]

\newtheorem{lemma}[theorem]{Lemma}
\newtheorem{proposition}{Proposition}

\theoremstyle{definition}
\newtheorem{definition}[theorem]{Definition}
\newtheorem{remark}{Remark}

\newtheorem{example}{Example}

\DeclareMathOperator{\tr}{tr}

\title[Nonholonomic Volume] 
      {Existence of Invariant Volumes in Nonholonomic Systems Subject to Nonlinear Constraints}

\author[William Clark and Anthony Bloch]{}

\subjclass{Primary: 70F25, 37C40; Secondary: 70G45.}
 \keywords{Geometric Mechanics, Nonholonomic Systems, Invariant Volumes}

 \email{wac76@cornell.edu}
 \email{abloch@umich.edu}

 \thanks{The first author is supported by NSF grant DMS-1645643. The second
 author is supported by NSF grants DMS-1613819 DMS-2103026 and AFOSR grant 77219283}

\thanks{$^*$ Corresponding author: W. Clark}

\begin{document}
\maketitle

\centerline{\scshape William Clark$^*$}
\medskip
{\footnotesize
 \centerline{Department of Mathematics, Cornell University}
   \centerline{301 Tower Rd, Ithaca, NY, USA}
} 

\medskip

\centerline{\scshape Anthony Bloch}
\medskip
{\footnotesize
	\centerline{Department of Mathematics, University of Michigan}
	\centerline{530 Church Street, Ann Arbor, MI, USA}
}
\bigskip


\begin{abstract}
We derive conditions for a nonholonomic system subject to nonlinear constraints (obeying Chetaev's rule) to preserve a smooth volume form. When applied to affine constraints, these conditions dictate that a basic invariant density exists if and only if a certain 1-form is exact and a certain function vanishes (this function automatically vanishes for linear constraints). Moreover, this result can be extended to geodesic flows for arbitrary metric connections and the sufficient condition manifests as integrability of the torsion. As a consequence, volume-preservation of a nonholonomic system is closely related to the torsion of the nonholonomic connection. Examples of nonlinear/affine/linear constraints are considered.
\end{abstract}

\section{Introduction}
An invariant volume is a powerful tool for understanding the asymptotic nature of a dynamical system. In particular, it is a well-known fact that unconstrained mechanical systems are volume-preserving. The case of nonholonomic systems is more nuanced as these systems generally fail to preserve the symplectic form (which follows from the fact that nonholonomic systems are \textit{not} variational) and hence, the induced volume form. This makes the study of invariant volumes in nonholonomic systems nontrivial. A famous example of this is the Chaplygin sleigh; this system, although energy-preserving, experiences ``dissipation'' which results in asymptotic stability (cf. \cite{zenkov1997} for a general discussion on stability of nonholonomic systems or \cite{ruina1998} for an interpretation via impact systems).

The existence of an invariant volume for a nonholonomic system offers two key insights. The first is the usual case in dynamical systems where an invariant measure allows for the use of the Birkhoff Ergodic Theorem (cf. e.g. 4.1.2 in \cite{katok1995introduction}) as well as for recurrence {(with the caveat of the volume being finite)}. The other is unique to nonholonomic systems; even though nonholonomic systems are \textit{not} Hamiltonian, ``nonholonomic systems which do preserve volume are in a quantifiable sense closer to Hamiltonian systems than their volume changing counterparts,'' \cite{fernandez2009hamiltonization} (see also \cite{BaGa2012,BoBi2015,BoBoMa2011,AvBor2013,VVKo1988}). Therefore, being able to find an invariant measure for a nonholonomic system allows for ergodic-like understanding of its asymptotic behavior and can provide a way to ``Hamiltonize'' a nonholonomic system ({although most} systems with an invariant volume still cannot be Hamiltonized, cf. \cite{Jovanovic_2019}).

There exists an abundance of research into finding invariant volumes for nonholonomic systems where the constraints are linear and symmetries are present: Chaplygin systems are studied in, e.g. \cite{AvBor2013,cantrijn2002,garcia2020,ILIYEV1985295, Koiller1992, monforte2004geometric, neimark1972dynamics}, Euler-Poincar\'{e}-Suslov systems are studied in, e.g. \cite{bloch2008nonholonomic, Jovanovic_1998}, systems with internal degrees of freedom are studied in, e.g. \cite{bloch2008nonholonomic, bloch2009quasivelocities, zenkov2003invariant}, and \cite{federovnaranjo} studies the case of symmetric kinetic systems where the dimension assumption does not hold. Related work on asympotic dynamics may be found in \cite{YoMo2020}. There are also results demonstrating that no invariant volumes exist, e.g. \cite{AvBor2005,nonexistenceellipsoid}.

The contribution of this work is to consider conditions for the existence of invariant volume in nonholonomic systems such that
\begin{enumerate}
	\item the constraints need not be linear/affine in the velocities,
	\item the analysis can be carried out on the ambient manifold rather than resorting to local coordinates, and
	\item absolutely no symmetry assumptions are used.
\end{enumerate}
Such an approach seems to be new.

Our main result is an existence condition for an invariant measure for a hyper-regular Lagrangian system  with an admissible set of nonlinear nonholonomic constraints  obeying 
Chetaev's rule. In the linear case the existence of a basic invariant density reduces to checking whether or not a certain 1-form can be made to be exact, \cite{blackall1941}, and this special case is similar to results in \cite{federovnaranjo,garcia2020}.

Preliminaries on nonholonomic systems are presented in Section \ref{sec:prelim}. 
Section \ref{sec:gnvf} presents the construction of the ``extended nonholonomic vector fields'' on the entire cotangent space $T^*Q$ which restricts to the nonholonomic vector field on $M$. Although there does not exist a canonical choice for the extended vector field, they will agree once restricted. An advantage of this approach is that one is able to design extended vector fields which ensure that the constraint {manifold} is an exponentially stable invariant manifold. 
The divergence calculation for a nonholonomic system is performed in Section \ref{sec:volume} via extended nonholonomic vector fields. 
The main results, Theorem \ref{th:main_proved} and Theorem \ref{th:affine_basic}, are proved in Section \ref{sec:cohom} (cf. Theorems \ref{th:main_proved} and \ref{th:affine_basic}). Additionally, it is shown that the invariant volume does not depend on the choice of extended nonholonomic vector field chosen and is unique {up to a} constant of motion. 
Section \ref{sec:connection} presents an interpretation of the linear constraint case to the torsion of the nonholonomic connection, which seems to be a novel observation. 
This paper concludes with examples in Section \ref{sec:examples}.


This paper is a continuation of the work done in \cite{clarkthesis} and, as such, many of the results below can be found there.

\section{Preliminaries and Notation}\label{sec:prelim}
\subsection{Unconstrained Mechanics}
We will first briefly cover the case of unconstrained mechanical systems before discussing nonholonomic systems. A smooth (finite-dimensional, $\mathrm{dim}(Q)=n$) manifold $Q$ is called the \textit{configuration space}, the tangent bundle $TQ$ is called the \textit{state space}, and the cotangent bundle $T^*Q$ is called the \textit{phase space}. The bundle projection maps will be denoted by $\tau_Q:TQ\to Q$ and $\pi_Q:T^*Q\to Q$. 
\subsubsection{Lagrangian Systems}
Lagrangian systems are described by a smooth real-valued function on the state space called the Lagrangian, $L:TQ\to Q$. The dynamics generated from the Lagrangian function arise from Hamilton's principle and are given by the Euler-Lagrange equations,
\begin{equation*}
	\frac{d}{dt}\frac{\partial L}{\partial \dot{q}} - \frac{\partial L}{\partial q} = 0.
\end{equation*}
\subsubsection{Hamiltonian Systems}
In constrast to Lagrangian systems, Hamiltonian systems evolve on the phase space. For a given Lagrangian, denote $\mathbb{F}L:TQ\to T^*Q$ as its fiber derivative.
\begin{definition}
	A Lagrangian, $L:TQ\to \mathbb{R}$, is \textit{hyperregular} if the fiber derivative is a global diffeomorphism. Furthermore, a Lagrangian is \textit{natural} if it has the form
	\begin{equation*}
		L(v) = \frac{1}{2}g(v,v) - \tau_Q^*V(v),
	\end{equation*}
	where $g$ is a Riemannian metric on $Q$ and $V\in C^\infty(Q)$.
\end{definition}
Throughout this work, all Lagrangians will be assumed to be hyperregular. Moreover, when affine constraints are considered, the Lagrangian will be assumed to be natural. 

For a given Lagrangian, we define the Hamiltonian $H:T^*Q\to\mathbb{R}$ via the \textit{Legendre transform}:
\begin{equation*}
	H(p) = \langle p,v\rangle - L(v), \quad p = \mathbb{F}L(v).
\end{equation*}
Let $\omega = dq^i\wedge dp_i$ be the canonical symplectic form on $T^*Q$. The resulting Hamiltonian vector field arising from the Hamiltonian $H$ and the symplectic form $\omega$ is denoted by $X_H$ and is given by
\begin{equation*}
	i_{X_H}\omega = dH,
\end{equation*}
where $i_X\omega = \omega(X,\cdot)$ is the contraction. An important feature of Hamiltonian vector fields is that they preserve the symplectic form, and thus are volume-preserving.
\begin{theorem}[Liouville's Theorem]\label{th:liouville}
	Hamiltonian dynamics preserve the symplectic form and, additionally, preserve the volume form $\omega^n$.
\end{theorem}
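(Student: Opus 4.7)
The plan is to prove invariance of $\omega$ directly via Cartan's magic formula, and then deduce invariance of $\omega^n$ from the fact that the Lie derivative is a derivation on the exterior algebra. A flow preserves a differential form if and only if the Lie derivative of the form along the generating vector field vanishes, so it suffices to show $\mathcal{L}_{X_H}\omega = 0$ and $\mathcal{L}_{X_H}\omega^n = 0$.

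First I would recall Cartan's identity $\mathcal{L}_X\alpha = i_X d\alpha + d(i_X\alpha)$ for any differential form $\alpha$ and vector field $X$. Applying this to $\omega$ with $X = X_H$ gives
\begin{equation*}
\mathcal{L}_{X_H}\omega = i_{X_H}d\omega + d(i_{X_H}\omega).
\end{equation*}
The first term vanishes because $\omega$ is closed by the definition of a symplectic form, and the second term vanishes because $i_{X_H}\omega = dH$ by the definition of a Hamiltonian vector field, so $d(i_{X_H}\omega) = d(dH) = 0$. This proves $\mathcal{L}_{X_H}\omega = 0$, which is equivalent to saying the flow of $X_H$ preserves $\omega$.

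For the volume form, I would use the Leibniz rule for the Lie derivative on wedge products, which gives
\begin{equation*}
\mathcal{L}_{X_H}(\omega^n) = \sum_{k=1}^{n} \omega^{k-1}\wedge \mathcal{L}_{X_H}\omega \wedge \omega^{n-k} = n\,\omega^{n-1}\wedge \mathcal{L}_{X_H}\omega = 0,
\end{equation*}
where the last equality uses the preceding paragraph. That $\omega^n$ is in fact a volume form (i.e., nowhere zero) follows from non-degeneracy of $\omega$, which is part of the definition of a symplectic manifold.

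There is essentially no genuine obstacle here; the only subtlety worth flagging is that the argument depends on both defining properties of a symplectic form, closedness and non-degeneracy, in complementary roles: closedness together with $d^2 = 0$ drives the invariance of $\omega$, while non-degeneracy is what makes $\omega^n$ a bona fide volume form so that the conclusion about volume preservation has content.
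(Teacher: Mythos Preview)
Your proof is correct and follows essentially the same route as the paper: both apply Cartan's magic formula to $\omega$, using closedness of $\omega$ and $d^2H=0$ to conclude $\mathcal{L}_{X_H}\omega=0$. The paper leaves the passage from $\omega$ to $\omega^n$ implicit, whereas you spell it out via the Leibniz rule, which is a welcome addition but not a different idea.
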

Liouville's theorem does not apply to general nonholonomic systems as they are generally not symplectic. Determining when an analogous version of this theorem applies to constrained systems is the main goal of this work.
\subsection{Constrained Dynamics}
For unconstrained Lagrangian systems, motion is allowed to occur in the entire state space, $TQ$. For constrained systems, motion is restricted to lie on a constraint manifold $N\subset TQ$ and we tacitly assume that $\tau_Q(N)=Q$. For the purposes of this work, we will not differentiate between nonholonomic and holonomic constraints as the machinery for the former will suffice for the latter. 

Let $N\subset TQ$ be the constraint submanifold which is, locally, described via the zero level-set of a regular function $G:TQ\to\mathbb{R}^k$, $k<n=\dim(Q)$. Let us denote each component of this function by $\Psi^\alpha$, $\alpha=1,\ldots,k$, i.e.
\begin{equation*}
	N = \bigcap_{\alpha=1}^k \, (\Psi^\alpha)^{-1}(0).
\end{equation*}
To transfer to the Hamiltonian point of view, we assume that the Lagrangian, $L:TQ\to\mathbb{R}$, is hyper-regular. The constraints on the cotangent bundle become
\begin{equation*}
	\Phi^\alpha = (\mathbb{F}L^{-1})^*\Psi^\alpha, \quad M = \mathbb{F}L(N) = \bigcap_{\alpha=1}^k \, \left( \Phi^\alpha\right)^{-1}(0).
\end{equation*}
These constraints are admissible if the following matrix is non-singular \cite{monforte2004geometric}.
\begin{equation*}
	m^{\alpha\beta} = \mathcal{C}^*d\Phi^\beta\left( X_{\Phi^\alpha}\right), \quad (m_{\alpha\beta}) = (m^{\alpha\beta})^{-1},
\end{equation*}
where $X_{\Phi^\alpha}$ is the Hamiltonian vector field generated by $\Phi^\alpha$ and $\mathcal{C}:T(T^*Q)\to T(T^*Q)$ is the $\mathbb{F}L$-related almost-tangent structure, {explained below}.


The constrained Euler-Lagrange equations become modified via Chetaev's rule (which is not necessarily the {physically} correct procedure, cf. \cite{marle} for a discussion), which will provide equivalent results to those in \cite{deLeon1997} where the ``almost-tangent'' structure of the tangent bundle is utilized. For Lagrangian systems, Chetaev's rule states that if we have the nonlinear constraints $\Psi^\alpha$, then the constraint forces have the following form
\begin{equation}\label{eq:Lagrange_Chetaev}
	\frac{d}{dt}\left( \frac{\partial L}{\partial \dot{q}}\right) - \frac{\partial L}{\partial q} = \lambda_\alpha\cdot \mathcal{S}^*d\Psi^\alpha,
\end{equation}
where $\mathcal{S}:T(TQ)\to T(TQ)$ is {a (1,1)-tensor} called the almost-tangent structure and, in local coordinates, is given by 
\begin{equation*}
	S = \frac{\partial}{\partial\dot{q}^i}\otimes dq^i.
\end{equation*}
If, rather than being general nonlinear, the constraints are affine in the velocities,
\begin{equation*}
	\Psi^\alpha(v) = \eta^\alpha(v) + \tau_Q^*\xi^\alpha(v),
\end{equation*}
where $\eta^\alpha\in\Omega^1(Q)$ are 1-forms and $\xi^\alpha\in C^\infty(Q)$ are functions, then \eqref{eq:Lagrange_Chetaev} becomes
\begin{equation*}
	\frac{d}{dt}\left( \frac{\partial L}{\partial \dot{q}}\right) - \frac{\partial L}{\partial q} = \lambda_\alpha\cdot \tau_Q^*\eta^\alpha.
\end{equation*}
However, this work will focus on the Hamiltonian formalism. The constraint manifold, $M\subset T^*Q$, is locally described by the joint level-set of the functions $\Phi^\alpha:T^*Q\to\mathbb{R}$ where $\Phi(p) = \Psi\circ\mathbb{F}L^{-1}(p)$. Moving \eqref{eq:Lagrange_Chetaev} to the cotangent side, the constrained Hamiltonian equations of motion become
\begin{equation}\label{eq:Hamiltonian_Chetaev}
	i_{X_H^M}\omega|_M = dH|_M + \lambda_\alpha\cdot \mathcal{C}^*d\Phi^\alpha|_M,
\end{equation}
where $\mathcal{C}:T(T^*Q)\to T(T^*Q)$ is the $\mathbb{F}L$-related almost-tangent structure, i.e. if $L$ is natural, then
\begin{equation*}
	\mathcal{C}^*\left( \alpha_idq^i + \beta^jdp_j\right) = g_{ij}\beta^jdq^i.
\end{equation*}
\subsection{Notation}
Throughout this work, $L:TQ\to\mathbb{R}$ will be a hyperregular Lagrangian with corresponding Hamiltonian $H:T^*Q\to\mathbb{R}$. The constraint manifold will be called $N\subset TQ$ or $M=\mathbb{F}L(N)\subset T^*Q$. These submanifolds will be locally described by the joint zero level-set of a collection of smooth functions,
\begin{equation*}
	N = \bigcap_{\alpha=1}^k \, (\Psi^\alpha)^{-1}(0),\quad M = \bigcap_{\alpha=1}^k\, (\Phi^\alpha)^{-1}(0).
\end{equation*}

If the constraints are affine in the velocities/momentum, the Lagrangian will be assumed to be natural with Riemannian metric $g$. In this case, the constraining functions will have the form
\begin{equation*}
	\Psi^\alpha(v) = \eta^\alpha(v) + \tau_Q^*\xi^\alpha(v), \quad
	\Phi^\alpha(p) = P(W^\alpha)(p) + \pi_Q^*\xi^\alpha(p),
\end{equation*}
where $\eta^\alpha$ are 1-forms, $\xi^\alpha$ are functions, $W^\alpha = \mathbb{F}L^{-1}(\eta^\alpha)$ are vector fields, and $P(W^\alpha)$ is the vector field's momentum
\begin{equation*}
	P(W^\alpha)(p) = \langle p, W^\alpha(\pi_Q(p))\rangle.
\end{equation*}
In this case of affine constraints, $\mathcal{D}\subset TQ$ will be the distribution
\begin{equation*}
	\mathcal{D}_q = \bigcap_{\alpha=1}^k\, \ker \eta^\alpha_q,
\end{equation*}
and $\mathcal{D}^*=\mathbb{F}L(\mathcal{D})\subset T^*Q$.

Finally, the nonholonomic vector fields with Hamiltonian $H$ and constraint manifold $M$ will be denoted by $X_H^M$.

\section{Extended Nonholonomic Vector Fields}\label{sec:gnvf}
Given a constraint manifold, $M\subset T^*Q$, we can determine the nonholonomic vector field, $X_H^M\in\mathfrak{X}(M)$ via \eqref{eq:Hamiltonian_Chetaev}. Commonly local, noncanonical, coordinates are chosen for $M$ (cf. \S 5.8 in \cite{bloch2008nonholonomic} and \cite{VANDERSCHAFT1994225}).  
However, we will instead work with a tubular neighborhood (for ease, this will be written as the entire manifold $T^*Q$) and define an \textit{extended} vector field $X_H^{ext}\in\mathfrak{X}(T^*Q)$ such that $X_H^{ext}|_{M} = X_H^M$.
This section outlines an intrinsic (albeit non-unique) way to determine such a vector field.
\begin{definition}
	For a given constraint submanifold $M\subset T^*Q$, a \textit{realization} of $M$ is an ordered collection of functions {$\mathscr{C}$} $:=\{\Phi^\alpha:T^*Q\to\mathbb{R}\}$ such that zero is a regular value of $G=\Phi^1\times\ldots\times \Phi^k$ and
	\begin{equation*}
	M = \bigcap_{\alpha}\, (\Phi^\alpha)^{-1}(0).
	\end{equation*}
	If the functions $\Phi^\alpha$ are affine in momenta, i.e. $\Phi^\alpha = P(X^\alpha)+\pi_Q^*f^\alpha$, then the realization is called \textit{affine}.
\end{definition}
\begin{remark}
	In the case where the Lagrangian is natural (which provides a Riemannian metric on $Q$) and the constraint submanifold is affine, we can choose the realization to be affine: 
	$$\mathscr{C} = \{P(W^1)+\pi_Q^*\xi^1,\ldots,P(W^k)+\pi_Q^*\xi^k\},$$ 
	where $W^i = \mathbb{F}L^{-1}\eta^i=(\eta^i)^\sharp$.
\end{remark}
By replacing $M$ with a realization $\mathscr{C}$, we can extend the nonholonomic vector field to a vector field on $T^*Q$ such that $\Phi^\alpha$ are first integrals.
Recall that the form of the nonholonomic vector field is $i_{X_H^M}\omega = dH + \lambda_\alpha\mathcal{C}^*d\Phi^\alpha$. We construct the extended nonholonomic vector field, {$\Xi_H^{\mathscr{C}}$}, by requiring that:
\begin{itemize}
	\item[(NH.1)] $i_{\Xi_H^\mathscr{C}}\omega = dH + \lambda_\alpha\mathcal{C}^*d\Phi^\alpha$ for smooth functions $\lambda_\alpha:T^*Q\to\mathbb{R}$, and
	\item[(NH.2)] $\mathcal{L}_{\Xi_H^\mathscr{C}}\Phi^\alpha = 0$ for all $\Phi^\alpha\in\mathscr{C}$.
\end{itemize}
Under reasonable compatibility assumptions on $\mathscr{C}$ (cf. \S3.4.1 in \cite{monforte2004geometric}, see Definition \ref{def:mass_matrix} below), such a vector field exists and is unique. However, given two different realizations, $\mathscr{C}$ and $\mathscr{C}'$, of the same constraint manifold $M$, it is not generally true that $\Xi_{H}^\mathscr{C} = \Xi_H^{\mathscr{C}'}$, \textit{however} $\Xi_{H}^\mathscr{C}|_{M} = \Xi_H^{\mathscr{C}'}|_{M}$. 

\begin{remark}
	The constraint manifold is given by the joint zero level-sets of the $\Phi^\alpha$ while the realization provides additional irrelevant information off of the constraint manifold. This is why $\Xi_H^\mathscr{C}\ne \Xi_H^{\mathscr{C}'}$ but they agree once restricted as will be proved in Proposition \ref{prop:agree_NH} below.
\end{remark}

\begin{definition}\label{def:mass_matrix}
	For a realization $\mathscr{C} = \{\Phi^1,\ldots,\Phi^k\}$, the constraint mass matrix, $(m^{\alpha\beta})$, is given by ``orthogonally'' pairing the constraints,
	\begin{equation*}
		m^{\alpha\beta} = \mathcal{C}^*d\Phi^\alpha\left( X_{\Phi^\beta}\right).
	\end{equation*}
	The realization is \textit{admissible} if this matrix is non-singular on a tubular neighborhood of $M$. The inverse matrix will be denoted by $(m_{\alpha\beta}) = (m^{\alpha\beta})^{-1}$.
\end{definition}
When the constraints are affine (along with a natural Lagrangian), the constraint mass matrix becomes
\begin{equation*}
	m^{\alpha\beta} = g(W^\alpha,W^\beta) = \eta^\alpha(W^\beta),
\end{equation*}
and admissibility of the constraints is equivalent to the constraints being linearly independent.

We can now write down a formula for $\Xi_H^\mathscr{C}$.
Using (NH.1) and (NH.2), we get that (where $\{\cdot,\cdot\}$ is the standard Poisson bracket)
\begin{equation*}
	\begin{split}
		\mathcal{L}_{\Xi_H^\mathscr{C}}\Phi^\beta &= i_{X_{\Phi^\beta}}\omega (\Xi_H^\mathscr{C}) \\
		&= -i_{\Xi_H^\mathscr{C}}\omega(X_{\Phi^\beta}) \\
		&= -dH\left( X_{\Phi^\beta}\right) - \lambda_\alpha \mathcal{C}^*d\Phi^\alpha(X_{\Phi^\beta}) \\
		&= \left\{ \Phi^\beta,H \right\} - \lambda_\alpha \mathcal{C}^*d\Phi^\alpha(X_{\Phi^\beta}) = 0.
	\end{split}
\end{equation*}
This implies that $\left\{ \Phi^\beta, H \right\} = m^{\alpha\beta}\lambda_\alpha$.
Due to the constraint mass matrix being nondegenerate, the multipliers are uniquely determined and the extended nonholonomic vector field is determined by
\begin{equation}\label{eq:nh_global}
	i_{\Xi_H^{\mathscr{C}}}\omega = dH - m_{\alpha\beta}\left\{H,\Phi^\alpha\right\} \mathcal{C}^*d\Phi^\beta
\end{equation}
\begin{definition}
	The 1-form on $T^*Q$ given by
	\begin{equation*}
	{\nu_H^\mathscr{C}} := dH - m_{\alpha\beta}\left\{H,\Phi^\alpha\right\} \mathcal{C}^*d\Phi^\beta,
	\end{equation*}
	is called the \textit{nonholonomic 1-form} with respect to the realization $\mathscr{C}$.
\end{definition}
\begin{proposition}\label{prop:agree_NH}
	Given two different realization, $\mathscr{C}$ and $\tilde{\mathscr{C}}$, the extended nonholonomic vector fields given by \eqref{eq:nh_global} agree on $M$.
\end{proposition}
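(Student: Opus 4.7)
The plan is to exploit uniqueness: on $\mathcal{D}^*$, both vector fields solve the same intrinsic problem that depends only on the distribution $\mathcal{D}^*$, not on the particular defining functions. First I would note that the two natural realizations arise from two framings $\{\eta^\alpha\}$ and $\{\tilde{\eta}^\alpha\}$ of the common annihilator $\mathcal{D}^0 \subset T^*Q$. There is an invertible matrix of functions $A \in C^\infty(Q)$ with $\tilde{\eta}^\alpha = A^\alpha_\beta \, \eta^\beta$, so the pullbacks $\pi_Q^*\eta^\alpha$ and $\pi_Q^*\tilde{\eta}^\alpha$ span the \emph{same} rank-$m$ subbundle $\pi_Q^*\mathcal{D}^0 \subset T^*(T^*Q)$. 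Hence condition (NH.1) for either realization is equivalent to the single intrinsic requirement
\begin{equation*}
i_{\Xi}\omega - dH \;\in\; \Gamma\bigl(\pi_Q^*\mathcal{D}^0\bigr).
\end{equation*}

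Next I would observe that, at points of $\mathcal{D}^*$, condition (NH.2) reduces to tangency: since zero is a regular value of both $G = P(W^1)\times\cdots\times P(W^m)$ and $\tilde{G}$, we have
\begin{equation*}
T_p\mathcal{D}^* \;=\; \bigcap_\alpha \ker dP(W^\alpha)|_p \;=\; \bigcap_\alpha \ker dP(\tilde W^\alpha)|_p,
\end{equation*}
so that $\Xi_H^{\mathscr{C}}(p), \Xi_H^{\mathscr{C}'}(p) \in T_p\mathcal{D}^*$. Consequently, the difference $Y := \Xi_H^{\mathscr{C}} - \Xi_H^{\mathscr{C}'}$ satisfies, at every $p \in \mathcal{D}^*$, both $Y \in T_p\mathcal{D}^*$ and $i_Y\omega \in \pi_Q^*\mathcal{D}^0|_p$.

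I would then close the argument by showing these two conditions force $Y = 0$. Nondegeneracy of $\omega$ identifies vectors $Y$ with $i_Y\omega \in \pi_Q^*\mathcal{D}^0$ as exactly the vertical lifts of sections of $\mathcal{D}^0$; concretely, if $i_Y\omega = \lambda_\alpha \pi_Q^*\eta^\alpha$ then $Y$ is the vertical vector corresponding to the covector $-\lambda_\alpha \eta^\alpha \in \mathcal{D}^0$. Imposing tangency $dP(W^\beta)(Y) = 0$ translates, via the canonical identification of vertical vectors with covectors, into $\lambda_\alpha\, \eta^\alpha(W^\beta) = \lambda_\alpha m^{\alpha\beta} = 0$ for every $\beta$. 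Invertibility of the constraint mass matrix (the lemma just proved) then yields $\lambda_\alpha \equiv 0$, hence $Y|_{\mathcal{D}^*} = 0$.

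The main obstacle is the last step: correctly identifying the subbundle $\{Y : i_Y\omega \in \pi_Q^*\mathcal{D}^0\}$ with the vertical lift of $\mathcal{D}^0$ and then showing tangency to $\mathcal{D}^*$ kills it. This is essentially bookkeeping with the canonical symplectic form and the vertical/covector identification on $T^*Q$, and is the same nondegeneracy argument that guarantees existence and uniqueness of $\Xi_H^{\mathscr{C}}$ in the first place (the compatibility hypothesis cited from \cite{monforte2004geometric}). No coordinate choice is needed beyond the symplectic duality between $\omega^\flat$ and the mass matrix.
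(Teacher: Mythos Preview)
Your argument is correct, and it is genuinely different from the paper's. The paper proceeds by direct computation: it treats the single-constraint case $\mathscr{C}=\{P(W)\}$, $\mathscr{C}'=\{fP(W)\}$, expands $i_{\Xi_H^{\mathscr{C}'}}\omega$ using Leibniz's rule for the Poisson bracket, and observes that the discrepancy is a multiple of $P(W)$, hence vanishes on $\mathcal{D}^*$; it then waves at the multi-constraint case with ``a similar argument.'' Your route is instead an intrinsic uniqueness argument: (NH.1) depends only on the subbundle $\pi_Q^*\mathcal{D}^0$, (NH.2) at points of $\mathcal{D}^*$ is just tangency, and the symplectic dual of $\pi_Q^*\mathcal{D}^0$ meets $T\mathcal{D}^*$ trivially because the constraint mass matrix is invertible. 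What this buys you is that the general case of $m$ constraints and an arbitrary $GL_m$-change of frame is handled in one stroke, with no bookkeeping of how the individual $\lambda_\alpha$ transform; it also makes transparent that the result is the same nondegeneracy that gives existence of $\Xi_H^{\mathscr{C}}$ in the first place. The paper's computation, on the other hand, produces an explicit formula for the off-constraint discrepancy, which is occasionally useful downstream (e.g.\ in the analogous calculation for $\vartheta_{\mathscr{C}}$ versus $\vartheta_{\mathscr{C}'}$ later in \S\ref{sec:cohom}).
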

\begin{proof}
	Let $\mathscr{C} = \{\Phi^\alpha\}$ and $\tilde{\mathscr{C}} = \{\tilde{\Phi}^\alpha\}$ be two different realizations of the same manifold $M$ and let $x\in M$. Their differentials span the annihilator,
	\begin{equation*}
		\mathrm{span}\left\{ d\Phi^\alpha_x\right\} = \mathrm{span}\left\{d\tilde{\Phi}^\alpha_x\right\} = \mathrm{Ann}(T_xM).
	\end{equation*}
	As the map $\mathcal{C}:T(T^*Q)\to T(T^*Q)$ is linear, we have
	\begin{equation*}
		\mathrm{span}\left\{ \mathcal{C}^*d\Phi^\alpha_x \right\} = \mathrm{span}\left\{ \mathcal{C}^*d\tilde{\Phi}^\alpha_x \right\}.
	\end{equation*}
	As the multipliers are unique, we must have
	\begin{equation*}
		m_{\alpha\beta}\left\{H,\Phi^\alpha\right\}\mathcal{C}^*d\Phi^\beta|_M = \tilde{m}_{\alpha\beta}\left\{H, \tilde{\Phi}^\alpha\right\}\mathcal{C}^*d\tilde{\Phi}^\beta|_M,
	\end{equation*}
	and therefore the resulting extended nonholonomic vector fields must agree on $M$.
\end{proof}
\begin{remark}
	A consequence of Proposition \ref{prop:agree_NH} is that this procedure
	of extending the nonholonomic vector field
	is still valid when $M$ is not globally defined as the level-set of functions. Suppose that $U,V\subset M$ are two neighborhoods characterized by 
	the zero level-sets of functions $\Phi,\phi:T^*Q\to\mathbb{R}$. Then $\Phi$ and $\phi$ both define different realizations on $U\cap V$. However, their corresponding vector fields agree on the intersection. 
	This observation will make the constructions requires for Theorem \ref{th:main_proved} well-defined even if $M$ cannot be globally defined as a level set.
\end{remark}
\subsection{Stabilizing Extended Vector Fields}
The extended nonholonomic vector field, $X_H^M$, is a vector field on the ambient space $T^*Q$ and has $M$ as an invariant manifold. As such, an integral curve of $X_H^M$ is only a valid trajectory if its initial condition lies in $M$. In practice using numerical techniques, round off errors can result in trajectories drifting away from $M$. To counter this effect, the condition (NH.2) can be replaced to include something in the spirit of a ``control Lyapunov function'' \cite{isidori1995}. This modification will only be explored in this section while the rest of this paper will be concerned with the unmodified approach explained above.

Let the modified feedback nonholonomic criteria be:
\begin{enumerate}
	\item[(NH.1)] $i_{\Xi_H^\mathscr{C}}\omega = dH + \lambda_\alpha \mathcal{C}^*d\Phi^\alpha$, for smooth functions $\lambda_\alpha:T^*Q\to \mathbb{R}$, and
	\item[(NH.2')] $\mathcal{L}_{\Xi_H^\mathscr{C}}\Phi^\alpha = -\kappa^\alpha \Phi^\alpha$ for some positive constants $\kappa^\alpha$, 
\end{enumerate}
where there is no summation in (NH.2').

This approach has the advantage that if $x_0\not\in M$, its trajectory exponentially approaches $M$, i.e. suppose that $\varphi_t$ is the flow of the extended feedback nonholonomic vector field, then
\begin{equation*}
	\Phi^\alpha\left( \varphi_t(x_0)\right) = e^{-\kappa^\alpha}\Phi^\alpha\left(x_0\right) \to 0.
\end{equation*}
Mimicking the derivation of \eqref{eq:nh_global}, we arrive at
\begin{equation}\label{eq:Stable_Global}
	{
	i_{\Xi_H^\mathscr{C}}\omega = dH - m_{\alpha\beta}\left( \left\{ H, \Phi^\alpha \right\} -  \kappa^\alpha \Phi^\alpha\right)\cdot \mathcal{C}^*d\Phi^\beta.
	}
\end{equation}
\begin{example}[Simple Pendulum]
	Consider the simple pendulum viewed as a constrained system in $\mathbb{R}^2$. The Hamiltonian and constraint are
	\begin{equation*}
		H = \frac{1}{2m}\left( p_x^2+p_y^2\right) + mgy, \quad \Phi = xp_x+yp_y=0,
	\end{equation*} 
	where $m$ is the mass of the pendulum and $g$ is the acceleration due to gravity. The equations of motion from \eqref{eq:Stable_Global} are
	\begin{equation}\label{eq:pendulum_stable}
		\begin{split}
		\dot{x} &= \frac{1}{m}p_x,\quad \dot{y} = \frac{1}{m}p_y, \\ 
		\dot{p}_x &= \frac{x}{x^2+y^2}\left[ mgy - \frac{1}{m}(p_x^2+p_y^2) - \kappa\left( xp_x+yp_y\right) \right], \\
		\dot{p}_y &= \frac{y}{x^2+y^2}\left[ mgy - \frac{1}{m}(p_x^2+p_y^2) - \kappa\left( xp_x+yp_y\right) \right] - mg.
		\end{split}
	\end{equation}

	A valid initial condition, $z_0 = (x(0),y(0),p_x(0),p_y(0))$, for \eqref{eq:pendulum_stable} needs to satisfy the constraint $\Phi(z_0)=0$. However, due to running numerical errors, the constraint will not be preserved as the state is integrated. By choosing $\kappa>0$, a correcting feedback is introduced to help preserve $\Phi=0$ as shown in Figure \ref{fig:stable_pendulum}.
\begin{figure}
	\begin{subfigure}{.45\textwidth}
		\centering
		\includegraphics[width=.9\linewidth]{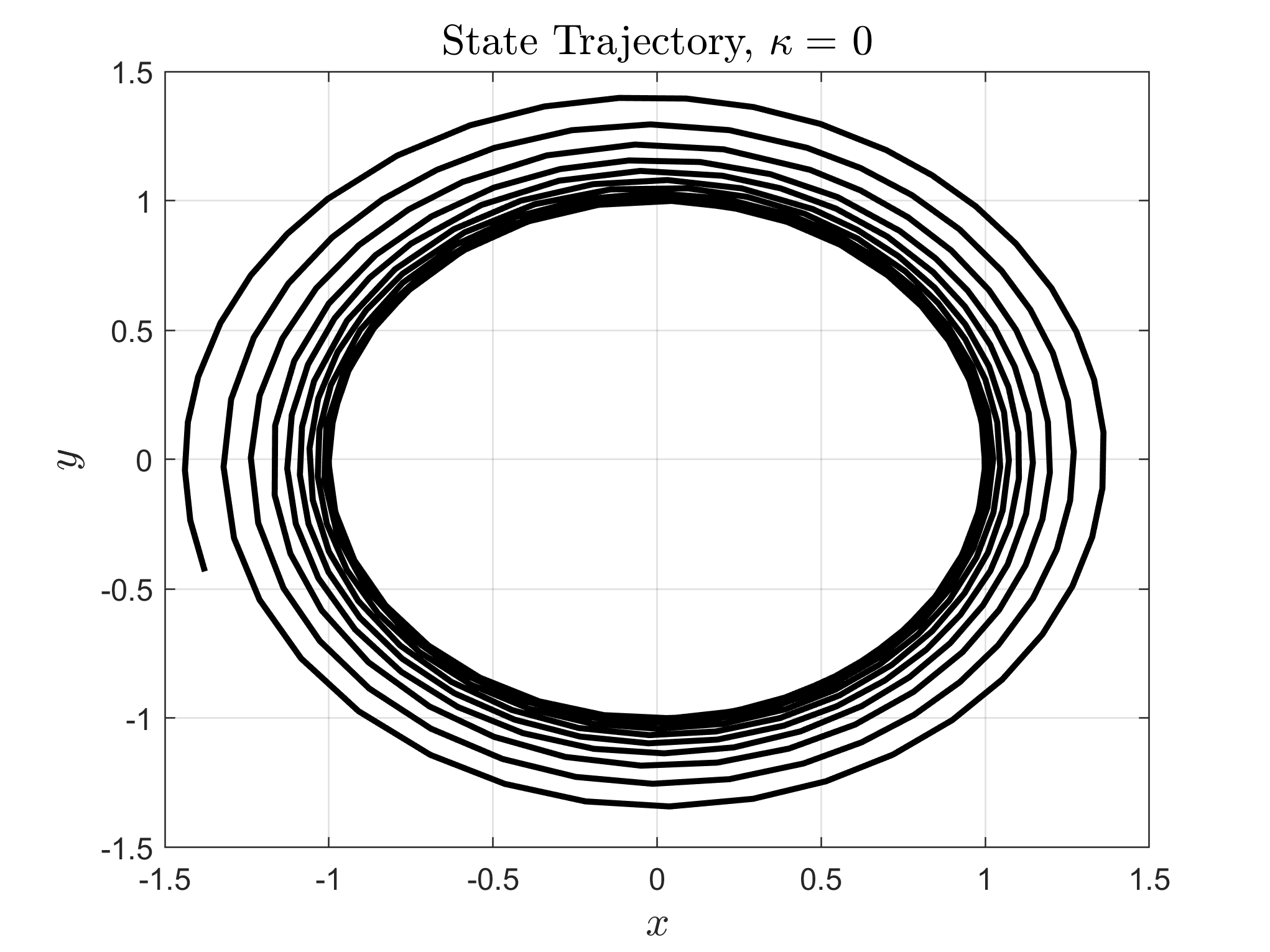}  
		\caption{State trajectory with $\kappa=0$.}
		\label{fig:sub-first}
	\end{subfigure}
	\begin{subfigure}{.45\textwidth}
		\centering
		\includegraphics[width=.9\linewidth]{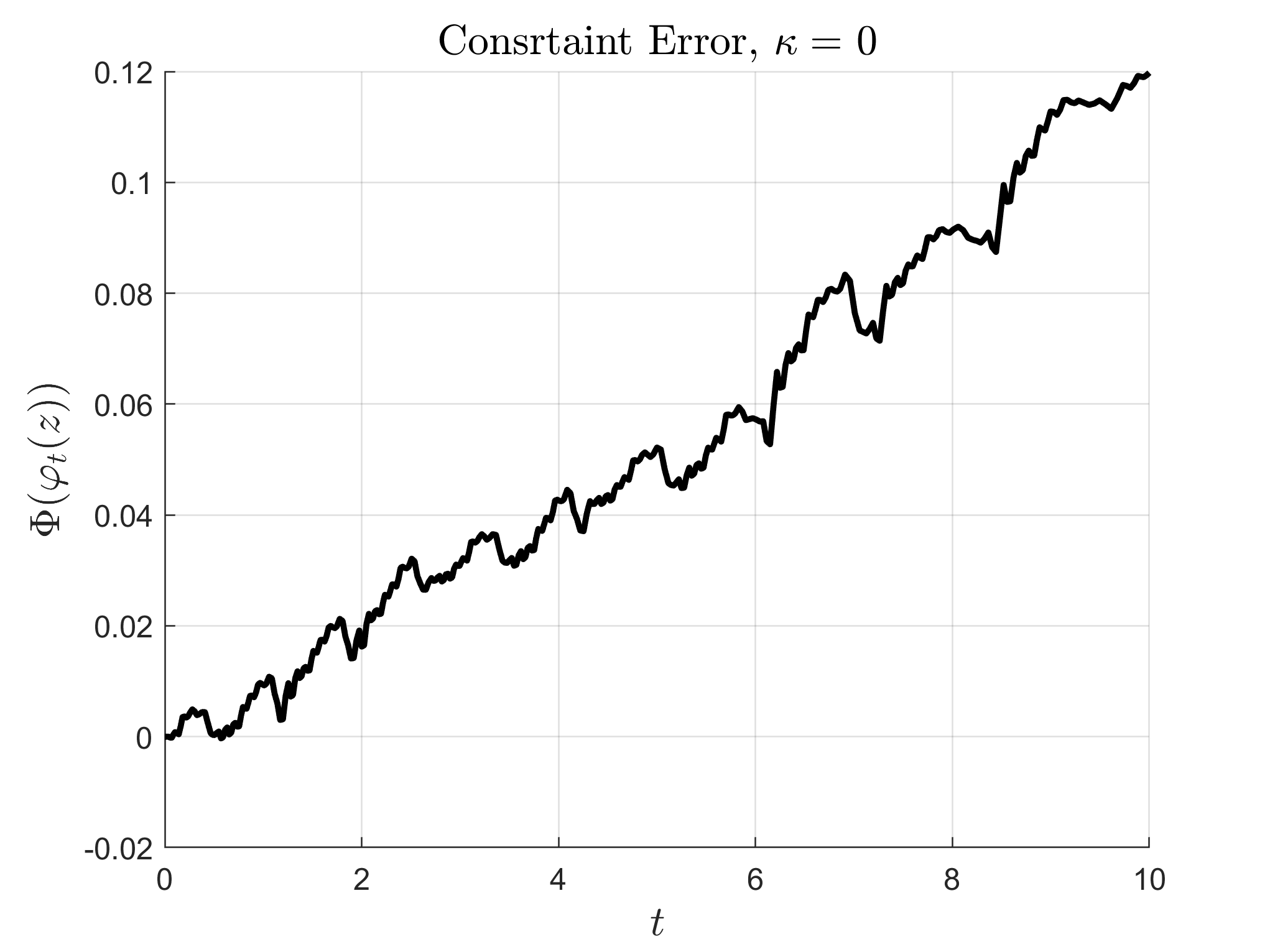}  
		\caption{Constraint trajectory with $\kappa=0$.}
		\label{fig:sub-second}
	\end{subfigure}
	\newline
	\begin{subfigure}{.45\textwidth}
		\centering
		\includegraphics[width=.9\linewidth]{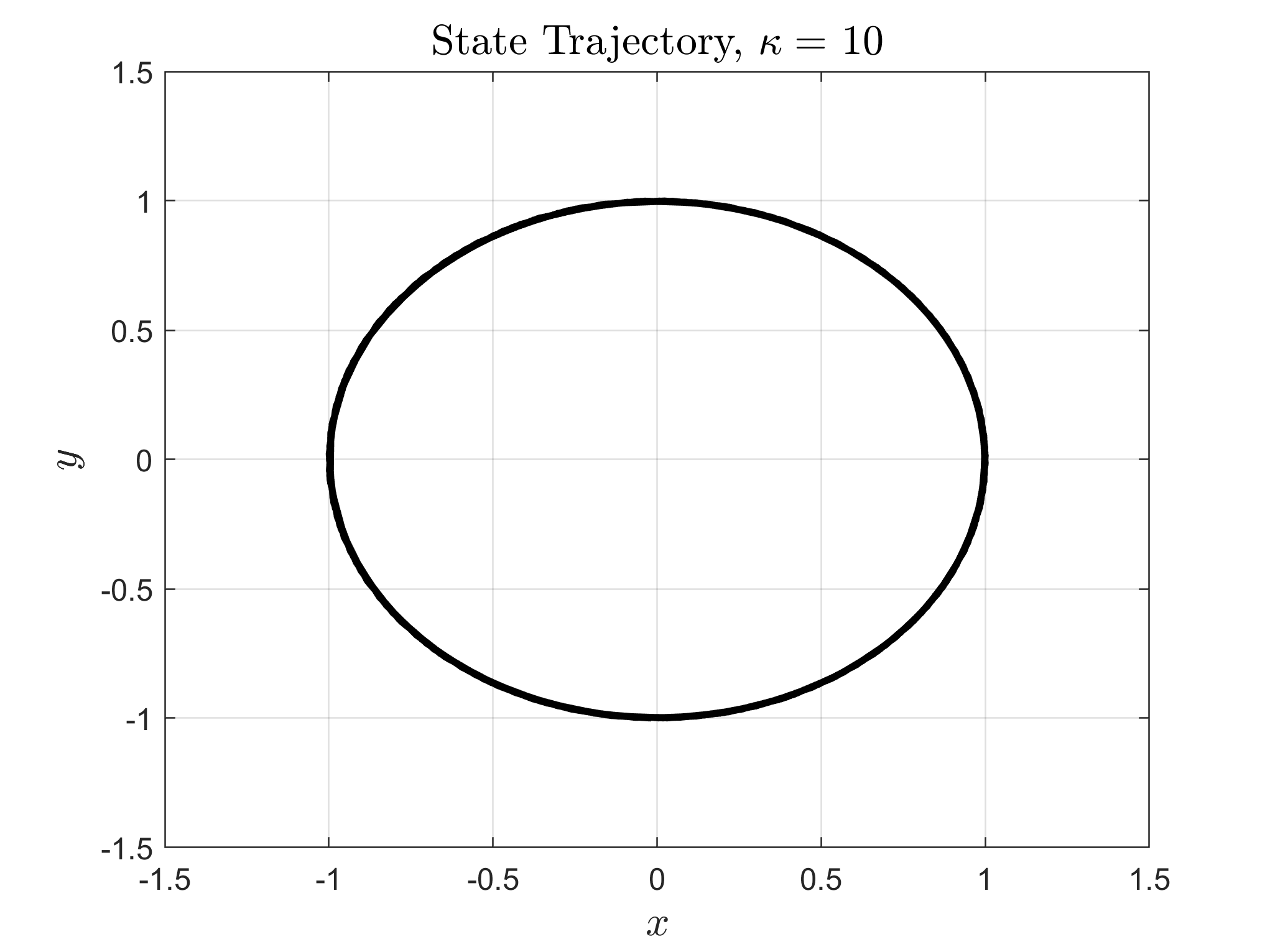}  
		\caption{State trajectory with $\kappa=10$.}
		\label{fig:sub-third}
	\end{subfigure}
	\begin{subfigure}{.45\textwidth}
		\centering
		\includegraphics[width=.9\linewidth]{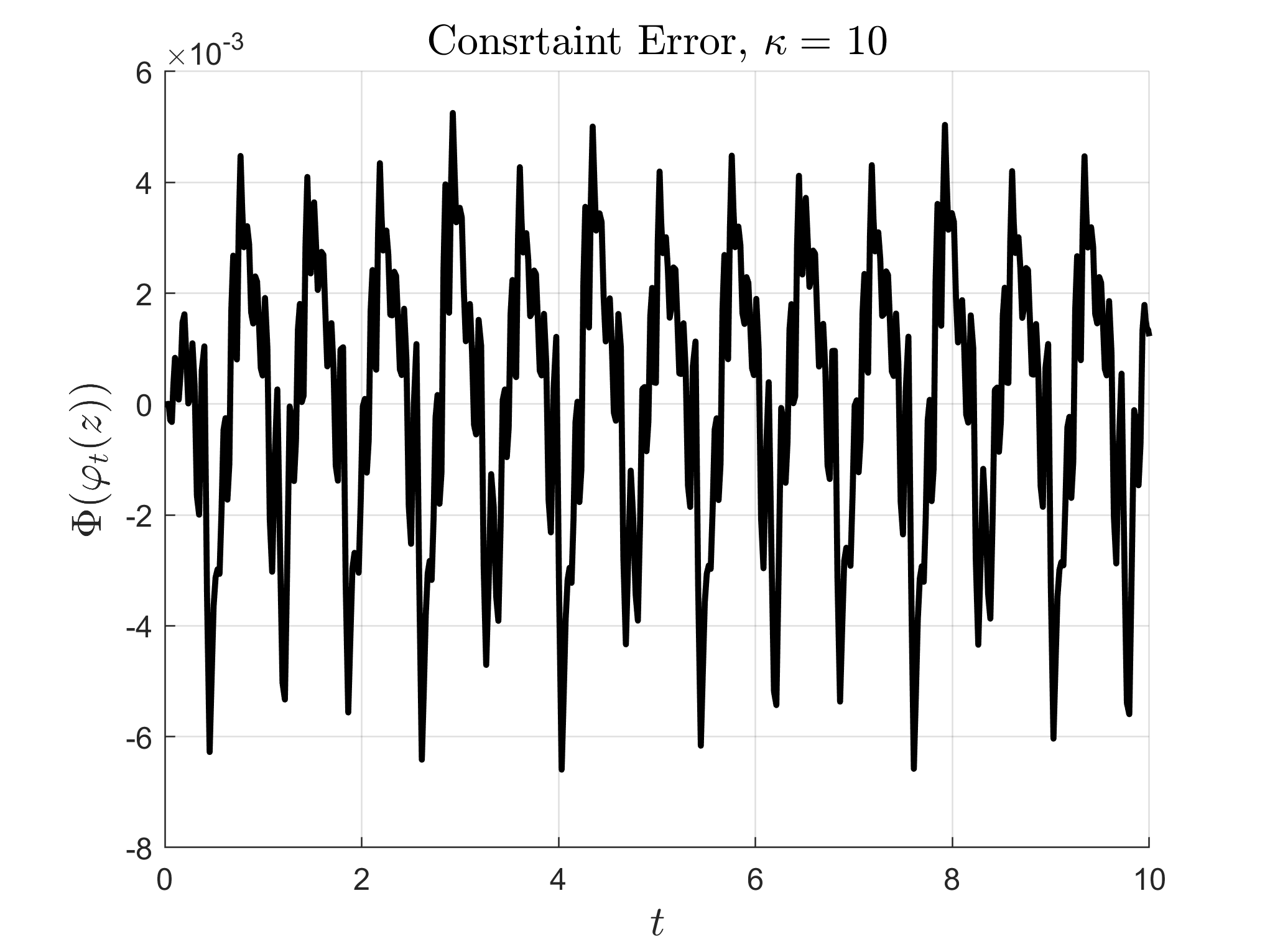}  
		\caption{Constraint trajectory with $\kappa=10$.}
		\label{fig:sub-fourth}
	\end{subfigure}
	\caption{The introduction of a positive $\kappa$ helps to stabilize the numerical trajectory. Numerical integration was performed with Matlab's \texttt{ode45}.}
	\label{fig:stable_pendulum}
\end{figure}
\end{example}
\subsection{Ideal Constraints}
We end this section with a brief consideration of ideal constraints - constraints that perform no work on the system. As discussed in \cite{deLeon1997}, a constraint $\Psi$ on $TQ$ is ideal if and only if
\begin{equation*}
	\left.\dot{q}^i\frac{\partial \Psi}{\partial \dot{q}^i}\right|_M = 0.
\end{equation*}
On the cotangent side, this condition becomes
\begin{equation*}
	\left.\mathcal{C}^*d\Phi\left( X_H\right)\right|_M = 0.
\end{equation*}
In particular, if all the constraints are ideal, then energy is conserved. Indeed,
\begin{equation*}
	\dot{H} = m_{\alpha\beta}\{H,\Phi^\alpha\}\mathcal{C}^*d\Phi^\beta(X_H).
\end{equation*}
Constraints will not be assumed to be ideal. Indeed, examples \ref{ex:rotating_disk}, \ref{ex:rotating_sphere}, and \ref{ex:constant_KE} will all be subject to non-ideal constraints. In particular, these examples will be volume-preserving but not energy-preserving (although a modified energy integral exists for the first two examples \cite{fgn2018}).

\section{Nonholonomic Volume}\label{sec:volume}

For a given nonholonomic vector field $X_H^M\in\mathfrak{X}(M)$, a volume-form $\mu\in \Omega^{2n-k}(M)$ is invariant if and only if
\begin{equation*}
	\mathrm{div}_\mu(X_H^M) = 0.
\end{equation*}
This section offers a way to choose a volume-form $\mu$ based off a (non-canonical choice) of realization $\mathscr{C}$ as well as showing how to compute the divergence of the nonholonomic vector fields with respect to this volume.
\subsection{Nonholonomic Volume form}
The symplectic manifold $T^*Q$ has a canonical volume form $\omega^n$. However, the nonholonomic flow takes place on a submanifold $M\subset T^*Q$ which is $2n-k$ dimensional. Therefore, $\omega^n$ is \textit{not} a volume form on $M$. Here, we construct a volume form on $M$ which is unique up to the choice of realization. The derivation of this will be similar to the construction of the volume form on an energy surface in \S 3.4 of \cite{abraham2008foundations}.
For the realization $\mathscr{C} = \{\Phi^1,\ldots,\Phi^k\}$, define the $k$-form
\begin{equation*}
\sigma_\mathscr{C} := d\Phi^1\wedge\ldots\wedge d\Phi^k.
\end{equation*}
\begin{definition}\label{def:nh_volume_form}
	If we denote the inclusion map by $\iota:M\hookrightarrow T^*Q$, then a nonholonomic volume, {$\mu_\mathscr{C}$}, is given by
	\begin{equation*}
	\mu_\mathscr{C} = \iota^*\varepsilon,\quad \sigma_\mathscr{C}\wedge\varepsilon = \omega^n.
	\end{equation*}
\end{definition}

\begin{proposition}
	Given an ordered collection of constraints, $\mathscr{C}$, the induced volume form $\mu_\mathscr{C}$ is unique.
\end{proposition}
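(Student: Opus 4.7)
The statement asks us to show that although the form $\varepsilon \in \Omega^{2n-m}(T^*Q)$ defined by the equation $\sigma_\mathscr{C}\wedge\varepsilon = \omega^n$ is not uniquely determined as a form on $T^*Q$, its pullback $\iota^*\varepsilon$ to $\mathcal{D}^*$ is. The plan therefore has two components: ensuring that such an $\varepsilon$ exists (locally, and then patched), and showing that any two choices differ by something that dies under $\iota^*$.

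For existence, I would note that since $\mathscr{C}$ is a realization, zero is a regular value of $G = P(W^1)\times\cdots\times P(W^m)$, so the $m$ one-forms $dP(W^i)$ are pointwise linearly independent on a neighborhood $U\supset \mathcal{D}^*$. On a sufficiently small open set $V\subset U$, I can extend them to a local coframe $(dP(W^1),\ldots,dP(W^m),\theta^1,\ldots,\theta^{2n-m})$, write
\[
\omega^n = f\, dP(W^1)\wedge\cdots\wedge dP(W^m)\wedge\theta^1\wedge\cdots\wedge\theta^{2n-m},
\]
and set $\varepsilon_V = f\,\theta^1\wedge\cdots\wedge\theta^{2n-m}$. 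A partition of unity subordinate to such a cover of $U$ then produces a global $\varepsilon$ on $U$ with $\sigma_\mathscr{C}\wedge\varepsilon = \omega^n$.

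For uniqueness, the core observation is the following algebraic fact: if $\alpha^1,\ldots,\alpha^m$ are pointwise linearly independent one-forms and $\beta$ is any form with $\alpha^1\wedge\cdots\wedge\alpha^m\wedge\beta = 0$, then $\beta$ lies in the ideal generated by the $\alpha^i$, i.e.\ $\beta = \sum_i \alpha^i\wedge\gamma_i$ for some forms $\gamma_i$. This follows by completing $\{\alpha^i\}$ to a local coframe and comparing coefficients. Applying this with $\alpha^i = dP(W^i)$ and $\beta = \varepsilon - \varepsilon'$ gives
\[
\varepsilon - \varepsilon' = \sum_{i=1}^m dP(W^i)\wedge \gamma_i
\]
locally on $U$. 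Since $P(W^i)\equiv 0$ on $\mathcal{D}^*$, we have $\iota^*dP(W^i) = d(\iota^*P(W^i)) = 0$, so $\iota^*(\varepsilon-\varepsilon') = 0$, proving $\iota^*\varepsilon = \iota^*\varepsilon'$.

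I expect no serious obstacle; the only subtle step is the algebraic lemma about the ideal generated by linearly independent one-forms, but it is standard and proved by picking a local coframe. The proposition is really an unpacking of the definition together with this ideal lemma, analogous in spirit to the well-definedness of the induced volume on an energy surface in Abraham--Marsden referenced just above the definition.
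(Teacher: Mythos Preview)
Your argument is correct. The paper's own proof takes a slightly different, more hands-on route: rather than invoking the ideal lemma, it argues by contradiction at the level of vectors. Assuming $\iota^*\alpha\neq 0$ for $\alpha=\varepsilon-\varepsilon'$, it picks $v^1,\ldots,v^{2n-m}\in T_x\mathcal{D}^*$ with $\alpha(v^1,\ldots,v^{2n-m})\neq 0$, completes to a basis on which $\sigma_{\mathscr{C}}$ is nonzero, and observes that $\sigma_{\mathscr{C}}\wedge\alpha$ evaluated on this basis reduces to a single nonvanishing product (all other shuffle terms die because each $dP(W^i)$ annihilates $T\mathcal{D}^*$), contradicting $\sigma_{\mathscr{C}}\wedge\alpha=0$. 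Your approach packages the same mechanism more structurally: the ideal lemma makes explicit that $\alpha$ is a combination $\sum dP(W^i)\wedge\gamma_i$, after which $\iota^*dP(W^i)=0$ finishes things immediately. Your version has the mild advantage of also addressing existence, which the paper leaves implicit, and of isolating a reusable algebraic fact; the paper's version avoids appealing to an external lemma and stays entirely self-contained.
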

\begin{proof}
	Suppose that $\varepsilon$ and $\varepsilon'$ are two forms satisfying $\sigma_\mathscr{C}\wedge\varepsilon = \omega^n$. Then
	\begin{equation*}
	\varepsilon - \varepsilon' = \alpha,\quad \sigma_{\mathscr{C}}\wedge\alpha = 0.
	\end{equation*}
	Now let $\iota:M\hookrightarrow T^*Q$ be the inclusion. Then from the above, we see that
	\begin{equation*}
	\iota^*\varepsilon = \iota^*\varepsilon' + \iota^*\alpha.
	\end{equation*}
	The result will follow so long as $\iota^*\alpha = 0$. Suppose that $\iota^*\alpha\ne0$ and choose vectors $v^1,\ldots,v^{2n-k}\in T_xM\subset T_xT^*Q$ such that $\alpha(v^1,\ldots,v^{2n-k})\ne 0$. Complete this collection of vectors to a basis of $T_xT^*Q$: $v^1,\ldots,v^{2n-k},v^{2n-k+1},\ldots,v^{2n}$ such that $\sigma_{\mathscr{C}}(v^{2n-k+1},\ldots,v^{2n})\ne 0$. Then we have
	\begin{equation*}
	\sigma_{\mathscr{C}}\wedge \alpha \left(v^1,\ldots,v^{2n}\right) = 
	(-1)^{(2n-k)k}\alpha(v^1,\ldots,v^{2n-k})\cdot\sigma_{\mathscr{C}}(v^{2n-k+1},\ldots,v^{2n}) \ne 0,
	\end{equation*}
	which is a contradiction.
\end{proof}

\begin{remark}
	Notice that for an ordered collection of constraints the volume form is unique. However, changing the order of the constraints changes the sign of the induced volume form and re-scaling constraints re-scales the volume form. In this sense, $\mathscr{C}$ uniquely determines $\mu_\mathscr{C}$, but $M$ only determines $\mu_\mathscr{C}$ up to a multiple. This is to be expected as all volume-forms are related by a multiplicative factor.
\end{remark}

While examining the failure of Liouville's theorem (Theorem \ref{th:liouville}) for nonholonomic systems, we will see when $\mu_\mathscr{C}$ is preserved under the flow of $X_H^M$. More generally, we will consider the existence of a smooth density $f\in C^\infty(M)$ when $f\mu_\mathscr{C}$ is preserved.
\subsection{Divergence}
Let $\omega = dq^i\wedge dp_i$ be the standard symplectic form on $T^*Q$. This in turn induces a volume form $\omega^n$. A measure of how much a flow fails to preserve a volume form is described by its divergence. 
We proceed with computing the divergence of a nonholonomic vector field, $\mathrm{div}_{\mu_{\mathscr{C}}}(X_H^M)$. When this is nonzero, we will be interested in finding a density, $f$, such that $\mathrm{div}_{f\mu_{\mathscr{C}}}(X_H^M)=0$. This problem will be addressed in \S\ref{sec:cohom}.

Before we begin with the divergence calculation, we first present a helpful lemma which allows us to relate the divergence of the extended nonholonomic vector field with the corresponding restricted vector field.
\begin{lemma}\label{le:subvol}
	If $\mathscr{C}$ is a realization of the constraint manifold $M\subset T^*Q$, then
	\begin{equation*}
	\left.\mathrm{div}_{\omega^n}\left(\Xi_H^{\mathscr{C}}\right)\right|_{M} = \mathrm{div}_{\mu_\mathscr{C}} \left(X_H^M\right).
	\end{equation*}
\end{lemma}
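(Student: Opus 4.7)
The plan is to exploit the factorization $\omega^n = \sigma_\mathscr{C}\wedge\varepsilon$ from Definition \ref{def:nh_volume_form} together with the defining property (NH.2) of the global nonholonomic vector field. The key observation is that since $\mathcal{L}_{\Xi_H^\mathscr{C}} P(W^\alpha) = 0$ for every $\alpha$, differentiating the wedge product $\sigma_\mathscr{C} = dP(W^1)\wedge\cdots\wedge dP(W^m)$ term-by-term and using $\mathcal{L}_X d = d\mathcal{L}_X$ gives $\mathcal{L}_{\Xi_H^\mathscr{C}}\sigma_\mathscr{C} = 0$.

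First I would compute the Lie derivative of the canonical volume form in two different ways. On one hand, by definition of divergence,
\begin{equation*}
\mathcal{L}_{\Xi_H^\mathscr{C}}\omega^n = \mathrm{div}_{\omega^n}\bigl(\Xi_H^\mathscr{C}\bigr)\cdot\omega^n = \mathrm{div}_{\omega^n}\bigl(\Xi_H^\mathscr{C}\bigr)\cdot\sigma_\mathscr{C}\wedge\varepsilon.
\end{equation*}
On the other hand, using the derivation property of $\mathcal{L}$ together with $\mathcal{L}_{\Xi_H^\mathscr{C}}\sigma_\mathscr{C} = 0$, we obtain
\begin{equation*}
\mathcal{L}_{\Xi_H^\mathscr{C}}\omega^n = \sigma_\mathscr{C}\wedge\mathcal{L}_{\Xi_H^\mathscr{C}}\varepsilon.
\end{equation*}
Equating these forces $\sigma_\mathscr{C}\wedge\bigl(\mathcal{L}_{\Xi_H^\mathscr{C}}\varepsilon - \mathrm{div}_{\omega^n}(\Xi_H^\mathscr{C})\cdot\varepsilon\bigr) = 0$.

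Next I would pull back by the inclusion $\iota:\mathcal{D}^*\hookrightarrow T^*Q$. By exactly the argument used in the uniqueness proof of $\mu_\mathscr{C}$ (any $(2n-m)$-form $\alpha$ with $\sigma_\mathscr{C}\wedge\alpha = 0$ satisfies $\iota^*\alpha = 0$), the pullback of the parenthesized form vanishes, giving
\begin{equation*}
\iota^*\bigl(\mathcal{L}_{\Xi_H^\mathscr{C}}\varepsilon\bigr) = \bigl(\mathrm{div}_{\omega^n}(\Xi_H^\mathscr{C})\bigr)\big|_{\mathcal{D}^*}\cdot\mu_\mathscr{C}.
\end{equation*}
Because $\Xi_H^\mathscr{C}$ is tangent to $\mathcal{D}^*$ (this is exactly condition (NH.2)) and restricts there to $X_H^\mathcal{D}$, the pullback commutes with the Lie derivative: $\iota^*\mathcal{L}_{\Xi_H^\mathscr{C}}\varepsilon = \mathcal{L}_{X_H^\mathcal{D}}\iota^*\varepsilon = \mathcal{L}_{X_H^\mathcal{D}}\mu_\mathscr{C}$. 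Combining these yields $\mathcal{L}_{X_H^\mathcal{D}}\mu_\mathscr{C} = \mathrm{div}_{\omega^n}(\Xi_H^\mathscr{C})|_{\mathcal{D}^*}\cdot\mu_\mathscr{C}$, which is the desired identity.

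The only mildly subtle step is justifying that $\iota^*$ kills any form $\alpha$ annihilated by $\sigma_\mathscr{C}\wedge(-)$; I would simply cite the uniqueness argument already used in the proof of the proposition preceding this lemma, since the contradiction-by-extension-to-a-basis works verbatim. Everything else is a bookkeeping exercise with Cartan calculus; no obstacle is expected.
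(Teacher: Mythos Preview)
Your argument is correct and follows essentially the same route as the paper: Leibniz on $\omega^n=\sigma_\mathscr{C}\wedge\varepsilon$, the vanishing $\mathcal{L}_{\Xi_H^\mathscr{C}}\sigma_\mathscr{C}=0$ from (NH.2), and then restriction to $\mathcal{D}^*$. In fact you spell out the final step (why $\iota^*$ kills $(2n-m)$-forms annihilated by $\sigma_\mathscr{C}\wedge(-)$, and why tangency lets $\iota^*$ commute with $\mathcal{L}$) more carefully than the paper, which simply asserts that ``the Lie derivative commutes with restriction.''
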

\begin{proof}
	Leibniz's rule for the Lie derivative provides
	\begin{equation*}
	\begin{split}
	\mathcal{L}_{\Xi_H^{\mathscr{C}}}\omega^n &= \mathcal{L}_{\Xi_H^{\mathscr{C}}}\left( \sigma_\mathscr{C}\wedge\varepsilon\right) \\
	&= \left(\mathcal{L}_{\Xi_H^{\mathscr{C}}}\sigma_\mathscr{C}\right)\wedge\varepsilon + \sigma_\mathscr{C}\wedge\left(\mathcal{L}_{\Xi_H^{\mathscr{C}}}\varepsilon\right).
	\end{split}
	\end{equation*}
	However, $\mathcal{L}_{\Xi_H^{\mathscr{C}}}\sigma_\mathscr{C} = 0$ because the constraints are preserved under the flow.
	Applying this, we see that
	\begin{equation*}
	\mathcal{L}_{\Xi_H^{\mathscr{C}}}\omega^n = 
	\sigma_\mathscr{C}\wedge \left(\mathcal{L}_{\Xi_H^{\mathscr{C}}}\varepsilon\right),
	\end{equation*}
	which gives
	\begin{equation*}
	\left(\mathrm{div}_{\omega^n}\left(\Xi_H^\mathscr{C}\right)\right) \sigma_{\mathscr{C}}\wedge\varepsilon = \sigma_{\mathscr{C}}\wedge\left( \mathcal{L}_{\Xi_H^\mathscr{C}}\varepsilon\right).
	\end{equation*}
	Due to the fact that the Lie derivative commutes with restriction, the result follows.
\end{proof}
This lemma allows for us to calculate the divergence of the extended nonholonomic vector field and to restrict to the constraint distribution \textit{afterwards}.

Using Cartan's magic formula along with Lemma \ref{le:subvol}, we see
\begin{equation*}
	\begin{split}
		\mathcal{L}_{\Xi_H^\mathscr{C}}\left( \omega^n\right) &=
		i_{\Xi_H^\mathscr{C}}d\omega^n + di_{\Xi_H^\mathscr{C}}\omega^n \\
		&= n\cdot d\left( i_{\Xi_H^\mathscr{C}}\omega\wedge\omega^{n-1}\right) \\
		&= n\cdot d\left( i_{\Xi_H^\mathscr{C}}\omega\right)\wedge\omega^{n-1} - n\cdot\left( i_{\Xi_H^\mathscr{C}}\omega\right)\wedge d\omega^{n-1} \\
		&= n\cdot d\nu_H^\mathscr{C}\wedge\omega^{n-1}.
	\end{split}
\end{equation*}
The divergence of the system is controlled by the failure of $\nu_H^\mathscr{C}$ to be closed. Its derivative is given by
\begin{equation*}
	d\nu_H^\mathscr{C} = ddH - d\left(m_{\alpha\beta}\left\{ H, \Phi^\alpha\right\} \cdot \mathcal{C}^*d\Phi^\beta\right).
\end{equation*}
Let $\phi_\alpha = m_{\alpha\beta}\Phi^\beta$. As $\Phi^\beta=0$ on $M$, the derivative becomes (where restriction to $M$ is implied)
\begin{equation*}
	d\nu_H^\mathscr{C} = -d\{H,\phi_\beta\}\wedge \mathcal{C}^*d\Phi^\beta + \{H,\phi_\beta\}\cdot d\mathcal{C}^*d\Phi^\beta.
\end{equation*}
The form $d\nu_H^\mathscr{C}$ is a general 2-form. However, the only terms that survive once it is wedged with $\omega^{n-1}$ are the diagonal terms, $dq^i\wedge dp_i$. Using local coordinates, $a_i^\beta dq^i = \mathcal{C}^*d\Phi^\beta$, we have
\begin{equation*}
	\left( d\nu_H^\mathscr{C}\right)_{\mathrm{diag}} =
	\left(\frac{\partial\{H,\phi_\beta\}}{\partial p_i}a_i^\beta - \{ H,\phi_\beta\} \frac{\partial a_i^\beta}{\partial p_i} \right) dq^i\wedge dp_i.
\end{equation*}
Therefore, the divergence is given by
\begin{equation*}
	\mathrm{div}_{\mu_{\mathscr{C}}}(X_H^M) = n\cdot \left(\frac{\partial\{H,\phi_\beta\}}{\partial p_i}a_i^\beta - \{ H,\phi_\beta\} \frac{\partial a_i^\beta}{\partial p_i} \right).
\end{equation*}
The first component of this expression can be written intrinsically. Notice that
\begin{equation*}
	d\pi_Q\cdot X_f = \frac{\partial f}{\partial p_i}\frac{\partial}{\partial q^i},
\end{equation*}
where $\pi_Q:T^*Q\to Q$ is the cotangent bundle projection. Therefore, the divergence can now be written as
\begin{equation}\label{eq:general_divergence}
	\mathrm{div}_{\mu_{\mathscr{C}}}(X_H^M) = -n\cdot \left(\mathcal{C}^*d\Phi^\beta\left( [X_H,X_{\phi_\beta}]\right) + \{ H,\phi_\beta\} \frac{\partial a_i^\beta}{\partial p_i} \right).
\end{equation}
\subsection{Intrinsic Forms of the Divergence}
Suppose the Lagrangian is natural with Riemannian metric $g = (g_{ij})$. Then
\begin{equation*}
	\mathcal{C}^*d\Phi^\beta = g_{ij}\frac{\partial \Phi^\beta}{\partial p_j} dx^i, \quad a_i^\beta = g_{ij}\frac{\partial \Phi^\beta}{\partial p_j}, \quad \frac{\partial a_i^\beta}{\partial p_i} = g_{ij}\frac{\partial^2 \Phi^\beta}{\partial p_i\partial p_j}.
\end{equation*}
Let us call this final double sum $\mathcal{M}^\beta$. Applying this to \eqref{eq:general_divergence}, we have
\begin{equation}\label{eq:natural_divergence}
	\mathrm{div}_{\mu_{\mathscr{C}}}(X_H^M) = -n\cdot \mathcal{C}^*d\Phi^\beta\left( [X_H,X_{\phi_\beta}] \right) - n\cdot \left\{ H,\phi_\beta\right\}\mathcal{M}^\beta.
\end{equation}
Next, assume that (in addition to the Lagrangian being natural), the constraints are affine which makes $\mathcal{M}^\beta=0$. Also the matrix, $m^{\alpha\beta}$, does not depend on $p$ and therefore, the divergence can be written as
\begin{equation}\label{eq:affine_divergence}
	\mathrm{div}_{\mu_{\mathscr{C}}}(X_H^M) = -n\cdot m_{\alpha\beta}\cdot \pi_Q^*\eta^\beta\left( [X_H, X_{\Phi^\alpha}] \right),
\end{equation}
as $\mathcal{C}^*d\Phi^\beta = \pi_Q^*\eta^\beta$ for affine constraints.

For affine systems, the divergence can be expressed intrinsically via \eqref{eq:affine_divergence}. To understand the divergence in the general nonlinear case, we need an intrinsic way to interpret the term $\mathcal{M}^\beta$. This can be accomplished via the following procedure:
\begin{equation*}
	\begin{tikzcd}[column sep=large]
		C^\infty(T^*Q) \arrow[r, "\mathcal{C}^*d"] & \Omega^1(T^*Q) \arrow[r, "-\omega^\sharp"] & \mathfrak{X}(T^*Q) \arrow[r, "\mathrm{div}_{\omega^n}(\cdot)"] & C^\infty(T^*Q) \\[-2em]
		\Phi^\beta \arrow[r, mapsto] \arrow[rrr, bend right=25, mapsto, "\Delta_\mathcal{C}"] & a_i^\beta dx^i \arrow[r, mapsto] & \displaystyle a_i^\beta \frac{\partial}{\partial p_i} \arrow[r, mapsto] & \displaystyle \frac{\partial a_i^\beta}{\partial p_i}
	\end{tikzcd}
\end{equation*}
The divergence for a nonholonomic system subject to nonlinear constraints is thus
\begin{equation}
	\mathrm{div}_{\mu_\mathscr{C}}(X_H^M) = \dim(Q)\cdot \left( \mathcal{C}^*d\Phi^\beta\left( \left[ X_H, X_{\phi_\beta}\right] \right) + \left\{ H, \phi_\beta\right\} \Delta_\mathcal{C}\left(\Phi^\beta\right) \right),
\end{equation}
which is the key to proving the main result, Theorem \ref{th:main_proved}.

\section{Invariant Volumes and the Cohomology Equation}\label{sec:cohom}
In general, the divergence of a nonholonomic system does not vanish as \eqref{eq:general_divergence} shows. When does there exist a \textit{different} volume form on $M$ that is invariant under the flow? i.e. does there exist a density $\rho>0$ such that $\mathrm{div}_{\rho\mu_{\mathscr{C}}}(X_H^M)=0$? Finding such a $\rho$ requires solving a certain type of partial differential equation which is known as the smooth dynamical cohomology equation. Solving this PDE is generally quite difficult. However, when the constraints are affine and the density is assumed to be basic, the problem becomes considerably more tractable 
\subsection{The Cohomology Equation}
What conditions need to be met for $\rho$ such that $\rho\mu_\mathscr{C}$ is an invariant volume form? Using the formula for the divergence as well as the fact that the Lie derivative is a derivation yields:
\begin{equation*}
\mathrm{div}_{\rho\mu_\mathscr{C}}\left(X_H^M\right) = \mathrm{div}_{\mu_\mathscr{C}}\left(X_H^M\right) + \frac{1}{\rho}\mathcal{L}_{X_H^M}(\rho).
\end{equation*}
Therefore the density, $\rho$, yields an invariant measure if and only if
\begin{equation}\label{eq:lie_deriv}
\frac{1}{\rho}\mathcal{L}_{X_H^M}(\rho) = -\mathrm{div}_{\mu_\mathscr{C}}\left( X_H^M\right).
\end{equation}
Notice that the left hand side of \eqref{eq:lie_deriv} can be integrated to
\begin{equation*}
\frac{1}{\rho}\mathcal{L}_{X_H^M}(\rho) = d\left(\ln \rho \right) \left( X_H^M\right).
\end{equation*}
Calling $\lambda = \ln \rho$, we have the following lemma.
\begin{lemma}
	For a nonholonomic vector field, $X_H^M$, there exists a smooth invariant volume, $\rho\mu_\mathscr{C}$, if there exists an exact 1-form $\alpha = d\lambda$ such that 
	\begin{equation}\label{eq:measpde}
	\begin{split}
	\alpha\left( X_H^M\right) &= -\mathrm{div}_{\mu_\mathscr{C}}\left(X_H^M\right).
	\end{split}
	\end{equation}
	Then the density is (up to a multiplicative constant) $\rho = e^\lambda$.
\end{lemma}
Therefore the existence of invariant volumes boils down to finding global solutions to the PDE \eqref{eq:measpde}. Using the divergence calculation, \eqref{eq:general_divergence}, we can state the main result.
\begin{theorem}\label{th:main_proved}
	The nonholonomic Hamiltonian vector field $X_H^M$ possesses a smooth invariant volume, $\rho\mu_{\mathscr{C}}$, if there exists an exact 1-form $\alpha\in\Omega^1(T^*Q)$ such that
	\begin{equation*}
		-\alpha\left( X_H^M\right) = \dim(Q)\cdot \left(\mathcal{C}^*d\Phi^\beta\left( [X_H,X_{\phi_\beta}]\right) + \{ H,\phi_\beta\} \Delta_\mathcal{C}\left(\Phi^\beta\right) \right).
	\end{equation*}
\end{theorem}

The remainder of this section deals with uniqueness of solutions and a necessary condition for solutions to exist.
\begin{remark}
	PDEs of the form $dg(X) = f$ for a given smooth function $f$, vector field $X$ and with $g$ as the unknown are called cohomology equations \cite{forni1995cohomological, livvsic1972cohomology}. Thus the equation \eqref{eq:measpde} is a cohomology equation.
\end{remark}
\subsubsection{Uniqueness}
The problem of existence is quite difficult in general and we postpone that discussion until the next subsection where we assume that the solution has the form $\rho=\pi_Q^*\tilde{\rho}$. In the meantime, assuming that there exists a function $\lambda\in C^\infty(M)$ that solves \eqref{eq:measpde}, do there exist other solutions? Suppose that $\lambda_1$ and $\lambda_2$ both solve \eqref{eq:measpde}. Then their difference must be a first integral of the system: $\mathcal{L}_{X_H^M}(\lambda_1-\lambda_2) = 0$. Solutions of \eqref{eq:measpde} are then unique up to constants of motion. i.e. if $\lambda$ solves \eqref{eq:measpde}, then every invariant density has the form (again, up to a multiplicative constant)
\begin{equation*}
\rho = \exp\left( \lambda + \mathrm{constant~of~motion} \right).
\end{equation*}
Therefore invariant measures can be thought of as an affine space with dimension being equal to the number of first integrals of the nonholonomic system.

\subsection{Special Case: Basic Densities and Affine Constraints}\label{sec:configuration}
In general, solving the cohomology equation \eqref{eq:measpde} is quite difficult. It turns out, however, that it is \textit{relatively} easy to determine necessary and sufficient conditions on the solvability when the density is assumed to be basic and the constraints are affine.

For the remainder of this section, unless otherwise stated, the constraints are assumed to be affine. In particular, if $N\subset TQ$, then the constraints have the form
\begin{equation*}
	\Psi^\alpha(v) = \eta^\alpha(v) + \tau_Q^*\xi^\alpha(v),
\end{equation*}
for 1-forms $\eta^\alpha$ and function $\xi^\alpha$. On the cotangent side, the constraints become
\begin{equation}\label{eq:affine_constraint}
	\Phi^\alpha(p) = P(W^\alpha)(p) + \pi_Q^*\xi^\alpha(p),
\end{equation}
where $W^\alpha = \mathbb{F}L^{-1}\eta^\alpha$.

\begin{definition}
	A density $\rho:T^*Q\to\mathbb{R}$ is said to be basic if $\rho=\pi_Q^*\tilde{\rho}$ for some $\tilde{\rho}:Q\to\mathbb{R}$.
\end{definition}

Under this assumption, \eqref{eq:affine_divergence} can be presented in a surprisingly nice way. In this case, the divergence can be described by an equivalence class of ``affine-forms.'' The density form and density function, defined below, is a representative element from this class.
\begin{definition}
	Let $\mathscr{C}$ be an affine realization of $M\subset T^*Q$ of the form \eqref{eq:affine_constraint}. Then, define the \textit{density form} and the \textit{density function} to be
	\begin{equation*}
		\begin{split}
			\vartheta_{\mathscr{C}} &= m_{\alpha\beta}\cdot\mathcal{L}_{W^\alpha}\eta^\beta, \\
			\zeta_\mathscr{C} &= m_{\alpha\beta}\cdot\mathcal{L}_{W^\alpha}\xi^\beta,
		\end{split}
	\end{equation*}
	respectively.
\end{definition}
Studying this pair, $(\vartheta_{\mathscr{C}},\zeta_\mathscr{C})$, provides necessary and sufficient conditions for the existence of basic densities. 
\begin{theorem}\label{th:affine_basic}
	For a natural Hamiltonian system subject to affine constraints, there exists an invariant volume of the form $(\pi_Q^*\tilde{\rho})\mu_{\mathscr{C}}$ if and only if there exists functions $\varphi_\gamma$ such that
	\begin{equation}\label{eq:affine_volume}
		\begin{split}
			n\cdot \vartheta_{\mathscr{C}} - \varphi_\gamma\eta^\gamma &= -d\ln\tilde{\rho}, \\
			n\cdot \zeta_\mathscr{C} - \varphi_\gamma\xi^\gamma &= 0.
		\end{split}
	\end{equation}
\end{theorem}
\begin{proof}
	To prove this, we will show that $-n\cdot\pi_Q^*\vartheta(X_H^M) - n\cdot \pi_Q^*\zeta_\mathscr{C} = \mathrm{div}_\mathscr{C}(X_H^M)$. Recall that the differential of a 1-form is given by $d\alpha(X,Y) = X\alpha(Y)-Y\alpha(X) - \alpha([X,Y])$ and that $\pi_Q^*\eta^\beta(X_H)|_M = -\xi^\beta$. Returning to \eqref{eq:affine_divergence}, we have
	\begin{equation*}
		\begin{split}
			\mathrm{div}_{\mu_\mathscr{C}}(X_H^M) &= -n\cdot m_{\alpha\beta}\cdot \pi_Q^*\eta^\beta\left( [X_H, X_{\Phi^\alpha}] \right) \\
			&= -n\cdot m_{\alpha\beta} \cdot \left(
			X_Hm^{\alpha\beta} + X_{\Phi^\alpha}\xi^\beta - \pi_Q^*d\eta^\alpha(X_H,X_{\Phi^\alpha}) \right) \\
			&= -n\cdot m_{\alpha\beta} \cdot \left( dm^{\alpha\beta}(\dot{q}) + d\xi^\beta(W^\alpha) - d\eta^\alpha(\dot{q},W^\beta) \right) \\
			&= -n\cdot m_{\alpha\beta}\cdot\left[ \left( di_{W^\beta}\eta^\alpha + i_{W^\beta}d\eta^\alpha\right)(\dot{q}) + d\xi^\beta(W^\alpha) \right] \\
			&= -n\cdot m_{\alpha\beta}\cdot \left[ \mathcal{L}_{W^\beta}\eta^\alpha(\dot{q}) + \mathcal{L}_{W^\alpha}\xi^\beta \right] \\
			&= -n\cdot\vartheta_{\mathscr{C}}(\dot{q}) - n\cdot\zeta_\mathscr{C}.
		\end{split}
	\end{equation*}
	This computation shows that $\mathrm{div}_{\mu_{\mathscr{C}}}(X_H^M) = -n\cdot\vartheta_{\mathscr{C}}(\dot{q})-n\cdot\zeta_\mathscr{C}$, but $\dot{q}$ cannot be arbitrary as it must lie within $M$ which states that $\eta^\alpha(\dot{q}) + \xi^\alpha = 0$. Adding multiples of the constraints to the divergence to produce an exact 1-form yields \eqref{eq:affine_volume}.
\end{proof}

This theorem allows for a straight-forward algorithm to find invariant volumes in nonholonomic systems subject to affine constraints; one only needs to compute the pair $(\vartheta_{\mathscr{C}},\zeta_\mathscr{C})$ and determine whether or not it can be made exact by appending constraints to it. This procedure will be carried out on multiple examples in \S\ref{sec:examples}.
\begin{remark}
	In the pure kinetic energy case with \textit{linear constraints} discussed in \cite{federovnaranjo}, it is proved that if the system admits an (arbitrary) invariant volume, then one can always find another invariant volume form whose density function depends only on the (reduced) configuration variables. Moreover, systems subjected to affine constraints may possess an invariant volume whose density is not basic, cf. \cite{nonbasicaffine}.
\end{remark}

The above shows that ``exactness'' of $(\vartheta_{\mathscr{C}},\zeta_\mathscr{C})$ determines the existence of a density depending on configuration. How does this depend on the choice of $\mathscr{C}$ to realize the constraints? It turns out the answer is independent of the choice of realization.
\begin{theorem}\label{th:density_realization}
	Let $\mathscr{C}_1$ and $\mathscr{C}_2$ both be affine realizations of the constraint manifold $M$. Suppose that there exist functions $\varphi_\gamma$ such that
	\begin{equation*}
		\vartheta_{\mathscr{C}_1} - \varphi_\gamma\eta^\gamma_1 = -d\ln\tilde{\rho}_1, \quad \zeta_{\mathscr{C}_1} - \varphi_\gamma\xi^\gamma_1 = 0,
	\end{equation*}
	then there exists functions $\psi_\gamma$ such that
	\begin{equation*}
		\vartheta_{\mathscr{C}_2} - \psi_\gamma\eta^\gamma_2 = -d\ln\tilde{\rho}_2, \quad \zeta_{\mathscr{C}_2} - \psi_\gamma\xi^\gamma_2 = 0.
	\end{equation*}
	Moreover, $(\pi_Q^*\tilde{\rho}_1)\mu_{\mathscr{C}_1} = (\pi_Q^*\tilde{\rho}_2)\mu_{\mathscr{C}_2}$ modulo a constant of motion.
\end{theorem}
\begin{proof}
	Let $\eta_2^\gamma = c_\alpha^\gamma\eta_1^\alpha$ and $\xi_2^\gamma = c_\alpha^\gamma \xi_1^\alpha$ where $c_\alpha^\gamma$ is the required coordinate change. For ease of notation, we will drop the subscript ``1.'' 
	The matrices are related via
	\begin{equation*}
		m_2^{\alpha\beta} = c_\gamma^\alpha c_\delta^\beta m_1^{\gamma\delta}.
	\end{equation*}
	The density form is
	\begin{equation*}
		\begin{split}
			m_2^{\alpha\beta}\vartheta_2 &= \mathcal{L}_{c_\gamma^\alpha W^\gamma} \left(c_\delta^\beta\eta^\delta\right) \\
			&= c_\gamma^\alpha c_\delta^\beta \mathcal{L}_{W^\gamma}\eta^\delta + c_\delta^\beta m_1^{\delta\gamma}dc_\gamma^\alpha + c_\gamma^\alpha dc_\delta^\beta(W^\gamma)\eta^\delta \\
			&= m_2^{\alpha\beta}\vartheta_1 + m_2^{\alpha\beta}\sigma_\alpha^\gamma dc_\gamma^\alpha + c_\gamma^\alpha dc_\delta^\beta(W^\gamma)\eta^\delta,
		\end{split}
	\end{equation*}
	where $(\sigma_\alpha^\gamma) := (c_\gamma^\alpha)^{-1}$. By Lemma \ref{le:barvinok} below, we have
	\begin{equation}
		m_2^{\alpha\beta}\vartheta_2 = m_2^{\alpha\beta}\vartheta_1 + m_2^{\alpha\beta} d\left[ \ln \det(c_\gamma^\alpha)\right] + c_\gamma^\alpha dc_\delta^\beta(W^\gamma)\eta^\delta.
	\end{equation}
	This shows that $\vartheta_1$ and $\vartheta_2$ differ by something exact and a multiple of the constraining 1-forms. 
	Using the fact that $\vartheta_1 - \varphi_\gamma\eta_1^\gamma = -d \ln\tilde{\rho}_1$, we see that
	\begin{equation*}
		\vartheta_2 - \left( \varphi_\gamma + C_\gamma\right) \eta^\gamma = d\left[ -\ln\tilde\rho_1 + \ln\det(c_\gamma^\alpha) \right],
	\end{equation*}
	where $m_2^{\alpha\beta}C_\gamma = c_\delta^\alpha dc_\gamma^\beta(W^\delta)$. This provides
	\begin{equation*}
		\tilde{\rho}_2 = \det(\sigma_\alpha^\gamma)\tilde{\rho}_1, \quad
		\psi_\gamma = c_\gamma^\delta \left[ \varphi_\delta + C_\delta\right].
	\end{equation*}
	These values of $\psi_\gamma$ make $\vartheta_2$ exact. We next check that these values of $\psi_\gamma$ make $\zeta_2$ vanish.
	\begin{equation*}
		\begin{split}
			m_2^{\alpha\beta}\left(\zeta_2 - \psi_\gamma c_\delta^\gamma \xi^\delta\right) &= 
			c_\gamma^\alpha c_\delta^\beta \mathcal{L}_{W^\gamma}\xi^\delta + c_\gamma^\alpha dc_\delta^\beta(W^\gamma)\xi^\delta - m_2^{\alpha\beta}\psi_\gamma c_\delta^\gamma \xi^\delta \\
			&= m_2^{\alpha\beta}\zeta_1 - m_2^{\alpha\beta}\varphi_\delta\xi^\delta = 0.
		\end{split}
	\end{equation*}
	It remains to show that $(\pi_Q^*\tilde{\rho}_1)\mu_{\mathscr{C}_1} = (\pi_Q^*\tilde{\rho}_2)\mu_{\mathscr{C}_2}$. This is equivalent to $\mu_{\mathscr{C}_1} = (\pi_Q^*\det(c_\gamma^\alpha))\mu_{\mathscr{C}_2}$. Recalling Definition \ref{def:nh_volume_form}, we have $\sigma_1 = d\Phi^1\wedge\ldots d\Phi^k$ and
	\begin{equation*}
		\sigma_2 = \bigwedge_{i=1}^k \, d\left( c_\gamma^i \Phi^\gamma\right) = \det(c_\gamma^\alpha) \sigma_1 + \bigwedge_{i=1}^k \, \Phi^\gamma dc_\gamma^i.
	\end{equation*}
	Upon applying the constraints, the last term disappears and we obtain
	\begin{equation*}
		\sigma_1\wedge\varepsilon_1 = \sigma_2\wedge\varepsilon_2 = \det(c_\gamma^\alpha)\sigma_1\wedge\varepsilon_2.
	\end{equation*}
	This implies the desired result.
\end{proof}
\subsubsection{Invariant Volumes of Holonomic Systems}
A reason why studying $(\vartheta_\mathscr{C},\zeta_\mathscr{C})$ is insightful is that it immediately demonstrates why holonomic systems systems are measure-preserving, i.e. when $\eta^\alpha = df^\alpha$ and $\xi^\alpha=0$, \eqref{eq:affine_volume} can always be solved.
This short section idemonstrates how our general theory collapses to the known holonomic case.
We start with a helpful lemma.
\begin{lemma}[\footnote{We thank Dr. Alexander Barvinok for help with this proof.}]
	\label{le:barvinok}
	$m_{\alpha\beta}\cdot dm^{\alpha\beta} = d\left[ \ln \det \left(m^{\alpha\beta}\right)\right]$.
\end{lemma}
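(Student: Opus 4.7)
The plan is to recognize the identity as a pointwise version of Jacobi's formula for the derivative of the determinant. Write $A = (m^{\alpha\beta})$ for the constraint mass matrix, viewed as a matrix-valued function on $Q$, and note that by the lemma preceding it $A$ is symmetric and positive-definite, hence invertible with $A^{-1} = (m_{\alpha\beta})$. The claim $m_{\alpha\beta}\,dm^{\alpha\beta} = d\bigl[\ln\det A\bigr]$ is then the trace identity $\tr(A^{-1}\,dA) = d(\ln\det A)$ applied entry-by-entry and viewed as an equality of 1-forms on $Q$.

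First I would establish Jacobi's formula from scratch using the cofactor expansion, to avoid appealing to external facts. Expanding $\det A$ along any row/column, the partial derivative with respect to the entry $m^{\alpha\beta}$ equals the $(\alpha,\beta)$-cofactor $C^{\alpha\beta}$; the classical adjugate relation gives $C^{\alpha\beta} = (\det A)\,(A^{-1})^{\beta\alpha} = (\det A)\,m_{\beta\alpha}$. Summing over $\alpha,\beta$ via the chain rule yields
\begin{equation*}
d(\det A) \;=\; \sum_{\alpha,\beta} C^{\alpha\beta}\,dm^{\alpha\beta} \;=\; (\det A)\cdot m_{\beta\alpha}\,dm^{\alpha\beta}.
\end{equation*}
Using symmetry of $A$ (so $m_{\beta\alpha} = m_{\alpha\beta}$) and dividing by the nonvanishing scalar $\det A$ (positive by positive-definiteness of $A$) gives the desired identity.

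An equivalent route, which I would mention briefly as a sanity check, is to differentiate the relation $m_{\alpha\gamma}\,m^{\gamma\beta} = \delta_\alpha^\beta$ to obtain $dm_{\alpha\gamma} = -m_{\alpha\rho}\,m_{\gamma\sigma}\,dm^{\rho\sigma}$, contract, and reconcile with the first approach. There is no real obstacle here: the only subtlety worth flagging is that the expression $m_{\alpha\beta}\,dm^{\alpha\beta}$ is a 1-form on $Q$ obtained from derivatives of the (smooth) coefficient functions of the metric in the frame $\{W^\alpha\}$, and the appeal to positive-definiteness is what guarantees $\det A > 0$ so that $\ln\det A$ is globally well-defined and smooth on $Q$.
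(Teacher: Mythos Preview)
Your proof is correct and follows essentially the same route as the paper: both arguments obtain Jacobi's formula via the cofactor/adjugate description of the inverse, with symmetry invoked at the end. The only cosmetic difference is that the paper first reduces to a curve $A(t)=(m^{\alpha\beta})\circ\gamma(t)$ and uses multilinearity of the determinant to split $\frac{d}{dt}\det A(t)$ into row contributions before cofactor-expanding, whereas you work directly with the differential via the chain rule; neither version requires anything the other does not.
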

\begin{proof}
	It suffices to check along a curve in the manifold. Let $\gamma:I\to Q$ be a curve and let $A(t) = \left(m^{\alpha\beta}\right)\circ\gamma(t)$ be the mass matrix along the curve. Note that $A(t)$ is positive-definite and changes smoothly with $t$. We have
	\begin{equation*}
	\frac{d}{dt}\ln\det A(t) = \frac{\frac{d}{dt}\det A(t)}{\det A(t)} = \sum_{i=1}^m \, \frac{\det A_i(t)}{\det A(t)},
	\end{equation*}
	where $A_i(t)$ is obtained from $A(t)$ by differentiating the $i$-th row and leaving all other rows intact, i.e.
	\begin{equation*}
	A_i(t) = \left( \begin{array}{ccc}
	a_{11}(t) & \cdots & a_{1m}(t) \\
	\vdots & \ddots & \vdots \\
	a_{(i-1)1}(t) & \cdots & a_{(i-1)m}(t) \\
	a'_{i1}(t) & \cdots & a'_{im}(t) \\
	a_{(i+1)1}(t) & \cdots & a_{(i+1)m}(t) \\
	\vdots & \ddots & \vdots \\
	a_{m1}(t) & \cdots & a_{mm}(t)
	\end{array}\right).
	\end{equation*}
	Expanding $\det A_i(t)$ along the $i$-th row:
	\begin{equation*}
	\det A_i(t) = \sum_{j=1}^m \, (-1)^{i+j-1} a'_{ij}(t)\det A_{ij}(t),
	\end{equation*}
	where $A_{ij}(t)$ is the $(m-1)\times (m-1)$ matrix obtained from $A_i(t)$ and hence from $A(t)$ by crossing out the $i$-th row and $j$-th column. 
	
	Next, observe that $(-1)^{i+j-1}\det A_{ij}/\det A(t)$ is the $(j,i)$-th entry of the inverse matrix $A^{-1}(t)=(b_{ij})(t)$, and since $A(t)$ is symmetric, is also the $(i,j)$-th entry of $(b_{ij})(t)$. Summarizing,
	\begin{equation*}
	\frac{d}{dt}\ln\det A(t) = \sum_{i,j=1}^m \, a_{ij}'(t)b_{ij}(t).
	\end{equation*}
\end{proof}
\begin{proposition}
	If the constraints are holonomic, then there exist function $\varphi_\gamma$ such that $\vartheta_{\mathscr{C}}-\varphi_\gamma\eta^\gamma$ is exact. In particular, if $\mathscr{C}$ is chosen such that all $\eta^\alpha$ are closed, $\vartheta_{\mathscr{C}}$ is exact.
\end{proposition}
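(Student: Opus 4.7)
The plan is to dispatch the ``in particular'' clause first by a direct application of Cartan's magic formula together with Lemma \ref{le:barvinok}, and then obtain the general holonomic statement as an easy consequence by invoking Frobenius' theorem and the realization-invariance result proved immediately above.

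For the special case, suppose $\mathscr{C}$ is chosen so that every $\eta^\beta$ is closed. Cartan's magic formula applied to each summand in the definition of the density form gives
\[
\mathcal{L}_{W^\alpha}\eta^\beta = d\bigl(i_{W^\alpha}\eta^\beta\bigr) + i_{W^\alpha}d\eta^\beta = dm^{\beta\alpha},
\]
since the second term vanishes by hypothesis. Substituting back and using the symmetry $m^{\beta\alpha} = m^{\alpha\beta}$ of the constraint mass matrix yields
\[
\vartheta_\mathscr{C} = m_{\alpha\beta}\,dm^{\alpha\beta} = d\bigl[\ln\det(m^{\alpha\beta})\bigr]
\]
by Lemma \ref{le:barvinok}. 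Hence $\vartheta_\mathscr{C}$ is globally exact with the explicit primitive $\ln\det(m^{\alpha\beta})$, and one may take $\rho=0$.

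For the general holonomic case, Frobenius' theorem guarantees that since $\mathcal{D}$ is integrable we can exhibit a frame $\{\tilde\eta^\alpha\}$ of $\mathcal{D}^0$ consisting of closed 1-forms. Build the associated natural realization $\tilde{\mathscr{C}} = \{P(\tilde W^\alpha)\}$ with $\tilde W^\alpha = (\tilde\eta^\alpha)^\sharp$. By the special case just treated, $\vartheta_{\tilde{\mathscr{C}}}$ is exact. The realization-invariance theorem preceding this proposition then produces an exact form and a section of $\mathcal{D}^0$ whose sum equals $\vartheta_\mathscr{C} - \vartheta_{\tilde{\mathscr{C}}}$; combining this with the exactness of $\vartheta_{\tilde{\mathscr{C}}}$ yields exactness of $\vartheta_\mathscr{C} + \rho$ for a suitable $\rho \in \Gamma(\mathcal{D}^0)$. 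The main subtlety is globality: Frobenius delivers closed generators only on sufficiently small open sets, and the realization-invariance lemma was checked only for single-constraint rescalings. A fully rigorous alternative is to compute $\mathcal{L}_{W^\alpha}\eta^\beta$ directly via the Frobenius structure equations $d\eta^\alpha = \omega^\alpha_\beta \wedge \eta^\beta$, absorb all $\eta^\gamma$-terms into $\rho$, and observe that the leftover trace $\omega^\gamma_\gamma$ equals $d\ln\det A$ in any local Frobenius decomposition $\eta^\alpha = A^\alpha_\beta\,df^\beta$, which patches to a globally well-defined closed 1-form modulo $\Gamma(\mathcal{D}^0)$.
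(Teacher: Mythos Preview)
Your proof of the ``in particular'' clause is essentially identical to the paper's: apply Cartan's magic formula to reduce $\vartheta_\mathscr{C}$ to $m_{\alpha\beta}\,dm^{\alpha\beta}$ and then invoke Lemma~\ref{le:barvinok}. The paper's proof in fact stops there, handling the first clause only implicitly via the sentence ``the 1-forms $\eta^\alpha$ can be chosen such that they are closed''; your explicit reduction of the general case to the realization-invariance theorem, together with your honest flagging of the globality and multiple-constraint subtleties (which the paper's brief treatment leaves equally unaddressed), makes your version the more careful of the two.
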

\begin{proof}
	When the constraints are holonomic, the 1-forms $\eta^\alpha$ can be chosen such that they are closed. Then the density form is 
	\begin{equation*}
	\begin{split}
	\vartheta_\mathscr{C} &= m_{\alpha\beta}\left( di_{W^\beta}\eta^\alpha + i_{W^\beta}\cancel{d\eta^\alpha}\right) \\
	&= m_{\alpha\beta}\cdot dm^{\alpha\beta} \\
	&= d\left[ \ln \det \left(m^{\alpha\beta}\right) \right],
	\end{split}
	\end{equation*}
	which is exact by Lemma \ref{le:barvinok}. If a different realization is chosen, Theorem \ref{th:density_realization} states that the resulting \eqref{eq:affine_volume} is still solvable.
\end{proof}
\section{Connections with the Nonholonomic Connection}\label{sec:connection}
It turns out that for natural Lagrangian systems subject to \textit{linear} constraints, the divergence - particularly the density form - is encoded in the nonholonomic connection. This interpretation seems to be new.

Throughout this section, let $L:TQ\to\mathbb{R}$ be a natural Lagrangian with Riemannian metric $g$ subject to the \textit{linear} constraints $\eta^\alpha(v)=0$. The nonholonomic connection for this system is given by (cf. \S 5.3 in \cite{bloch2008nonholonomic} and \cite{Vershik}):
\begin{equation*}
\nabla^\mathscr{C}_XY = \nabla_XY + W^i\cdot m_{ij}\left[ X\left(\eta^j(Y)\right) - \eta^j\left(\nabla_XY\right) \right].
\end{equation*}
The equations of motion can then be described via
\begin{equation*}
\nabla^\mathscr{C}_{\dot{q}}\dot{q} = F,
\end{equation*}
where $F$ contains the potential and external forces (the constraint forces are contained in the connection).
\subsection{Torsion}
The nonintegrability of the constraints appears in the torsion of the connection. Computing this, we see
\begin{equation*}
\begin{split}
T^\mathscr{C}(X,Y) &= \nabla^\mathscr{C}_XY - \nabla^\mathscr{C}_YX - [X,Y] \\
&= W^i\cdot m_{ij} \left[ X(\eta^j(Y))-Y(\eta^j(X)) - \eta^j(\nabla_XY-\nabla_YX) \right] \\
&= W^j\cdot m_{ij}\left[ X(\eta^j(Y))-Y(\eta^j(X))-\eta^j([X,Y]) \right] \\
&= W^j\cdot m_{ij} \cdot d\eta^j(X,Y).
\end{split}
\end{equation*}
The torsion can be written as
\begin{equation*}
T^\mathscr{C} = m_{\alpha\beta}\cdot W^\alpha \otimes d\eta^\beta.
\end{equation*}
Indeed, if the constraining 1-forms $\eta^j$ are all closed (so holonomic) then the torsion vanishes. It is worth pointing out that the torsion is vertical-valued; if $X,Y\in M$, then $T^\mathscr{C}(X,Y)\in M^\perp$,i.e. $T(X,Y)$ is orthogonal to the constraint distribution.

Due to the fact that the torsion is a (1,2)-tensor, its trace will be a (0,1)-tensor. Therefore, the trace of the nonholonomic torsion will be a 1-form:
\begin{equation*}
\begin{split}
\tr T^\mathscr{C} &= m_{\alpha\beta}\cdot i_{W^\alpha}d\eta^\beta.
\end{split}
\end{equation*}
Returning to the density form, we see that
\begin{equation*}
\tr T^\mathscr{C} + d\ln\det\left(m^{\alpha\beta}\right) = \vartheta_\mathscr{C},
\end{equation*}
i.e. the trace of the torsion differs from the density form by something exact. This leads to the following theorem.
\begin{theorem}
	A natural nonholonomic system subject to linear constraints has an invariant volume of the form $(\pi_Q^*\rho)\cdot\mu_{\mathscr{C}}$ if and only if there exists functions $\varphi_\gamma$ such that
	\begin{equation*}
	\tr T^\mathscr{C} + \varphi_\gamma\eta^\gamma
	\end{equation*}
	is exact.
\end{theorem}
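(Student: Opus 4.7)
The plan is to reduce the statement to Theorem \ref{th:configuartion_measure} via the identity
\begin{equation*}
\vartheta_\mathscr{C} = \tr T^\mathscr{C} + d\ln\det\!\left(m^{\alpha\beta}\right),
\end{equation*}
which was established just above the theorem statement. Since $d\ln\det(m^{\alpha\beta})$ is globally exact on $Q$ (the constraint mass matrix is everywhere positive-definite, so its determinant is a strictly positive smooth function and its logarithm is a well-defined smooth function on $Q$), the forms $\vartheta_\mathscr{C}$ and $\tr T^\mathscr{C}$ differ by an exact 1-form. In particular, for any fixed $\rho \in \Gamma(\mathcal{D}^0)$, the form $\vartheta_\mathscr{C}+\rho$ is exact if and only if $\tr T^\mathscr{C} + \rho$ is exact.

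First I would invoke Theorem \ref{th:configuartion_measure} to translate the existence of an invariant volume of the form $(\pi_Q^*f)\cdot\mu_\mathscr{C}$ into the existence of some $\rho \in \Gamma(\mathcal{D}^0)$ with $\vartheta_\mathscr{C} + \rho$ exact. Next I would use the displayed identity to rewrite this condition: $\vartheta_\mathscr{C} + \rho = \tr T^\mathscr{C} + \rho + d\ln\det(m^{\alpha\beta})$, so exactness of the left-hand side is equivalent to exactness of $\tr T^\mathscr{C}+\rho$. This gives the biconditional with the same $\rho$, completing the proof.

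The only step that requires any care is verifying that $\ln\det(m^{\alpha\beta})$ is globally defined as a smooth function on $Q$; this is immediate from the lemma stating positive-definiteness of $(m^{\alpha\beta})$ whenever the constraints are linearly independent, which is part of the regularity assumption on $\mathcal{D}$. No other subtlety enters: the reduction is purely algebraic once the torsion formula and Theorem \ref{th:configuartion_measure} are in hand, so I do not expect a substantive obstacle. If one wanted to make the density explicit, one could record that when $\tr T^\mathscr{C}+\rho = dh$ then $\vartheta_\mathscr{C}+\rho = d\bigl(h+\ln\det(m^{\alpha\beta})\bigr)$, so the invariant volume form produced via the construction in Theorem \ref{th:main} is $\exp(\pi_Q^* h)\cdot\det(m^{\alpha\beta})\cdot \mu_\mathscr{C}$.
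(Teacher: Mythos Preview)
Your proposal is correct and follows exactly the approach the paper takes: the paper derives the identity $\vartheta_\mathscr{C} = \tr T^\mathscr{C} + d\ln\det(m^{\alpha\beta})$ immediately before the theorem and then simply states that this ``leads to the following theorem,'' implicitly invoking Theorem~\ref{th:configuartion_measure}. Your write-up merely makes this implicit step explicit, and your remark on global well-definedness of $\ln\det(m^{\alpha\beta})$ is a welcome clarification.
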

\begin{remark}
	The vanishing of the torsion shows that the constraints are integrable while the integrability of the (trace of the) torsion shows that a volume is preserved.
\end{remark}
In the case of nonholonomic systems, the nonholonomic connection is compatible with the metric but has nonzero torsion. This idea extends to arbitrary, metric-compatible connections as the following theorem states.
\begin{theorem}
	Let $\tilde{\nabla}$ be an affine connection compatible with the metric with torsion $\tilde{T}$. There exists an invariant volume with density of the form $\pi_Q^*\rho$ for the geodesic spray if and only if $\tr\tilde{T}$ is exact.
\end{theorem}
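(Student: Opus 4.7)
The plan is to imitate the nonholonomic proof of the preceding theorem, in the simpler setting where there is no constraint distribution. Since $\tilde{\nabla}$ is metric-compatible, the kinetic energy $H = \tfrac12 g^{ij}p_ip_j$ is conserved along geodesics of $\tilde\nabla$, so $\tilde\nabla_{\dot q}\dot q = 0$ defines a vector field $X_{\tilde\nabla}$ on $T^*Q$ via the fiber derivative. I will compute its divergence with respect to the Liouville volume $\omega^n$ and aim to show
\begin{equation*}
\mathrm{div}_{\omega^n}\left(X_{\tilde\nabla}\right) = -n\cdot\pi_Q^*(\tr\tilde T)(\dot q).
\end{equation*}
Given this identity, a verbatim copy of the cohomology argument from \S\ref{sec:cohom} finishes the proof: the density $\pi_Q^* f$ yields an invariant volume iff $d\ln f(\dot q) = n\,\tr\tilde T(\dot q)$ for every $\dot q \in T_qQ$; since $\dot q$ now ranges over \emph{all} of $TQ$ (no constraint cuts it down), this is equivalent to the equality of 1-forms $d(\tfrac{1}{n}\ln f) = \tr\tilde T$ on $Q$, i.e.\ to exactness of $\tr\tilde T$. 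The absence of a constraint distribution is precisely why no $\rho\in\Gamma(\mathcal D^0)$ correction appears in the statement.

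To prove the divergence identity, I would decompose $\tilde\nabla = \nabla + K$, where $\nabla$ is the Levi-Civita connection and $K$ is the contorsion tensor, expressible in terms of $\tilde T$ and $g$ via the standard metric-compatibility formula. The geodesic vector field then splits as $X_{\tilde\nabla} = X_H + Z$, where $X_H$ is the Hamiltonian vector field of $H$ (divergence-free by Liouville's theorem) and $Z = Z^k(q,p)\,\partial_{p_k}$ is a vertical correction, quadratic in $p$, built from the contorsion. Computing $\mathrm{div}_{\omega^n}(Z) = \partial_{p_k}Z^k$ in canonical coordinates, the totally-symmetric-in-lower-indices part of $K$ cancels upon taking the $p$-trace, by the same antisymmetric-pairing argument that eliminated the ``first line'' in the one-constraint divergence computation of \S\ref{sec:volume}. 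What survives is exactly $-n\cdot\pi_Q^*(\tr\tilde T)(\dot q)$.

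I expect the main obstacle to lie in this coordinate computation: verifying that only the antisymmetric-in-lower-indices piece of $K$, which is the torsion itself, survives the $p$-trace, and that no residual ``$d\ln\det(m^{\alpha\beta})$''-type correction appears. In the nonholonomic setting this correction was the difference between $\vartheta_\mathscr{C}$ and $\tr T^\mathscr{C}$ handled by Lemma \ref{le:barvinok}; here the analogous contribution comes from the metric $g$ itself, and is absorbed entirely into the divergence-free piece $X_H$ rather than surfacing as a separate exact form. Once this calculation is in place, necessity is immediate from the uniqueness remark in \S\ref{sec:cohom}: any two solutions of the cohomology equation differ by a first integral, so if one solution has the form $\pi_Q^* f$ then the 1-form $\tr\tilde T$ must already be exact on $Q$.
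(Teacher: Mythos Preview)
Your approach is correct and takes a genuinely different route from the paper's. The paper works directly on $TQ$ with the Riemannian volume $\Omega = \det g\, dx\wedge dv$, computes the divergence of the full geodesic spray $X = v^i\partial_{x^i} - \tilde\Gamma^i_{jk}v^jv^k\partial_{v^i}$ in one shot, and then invokes metric compatibility through the single identity $g^{jk}\partial_i g_{jk} = 2\tilde\Gamma^k_{ik}$ to reduce the answer to $(\tilde\Gamma^i_{ki}-\tilde\Gamma^i_{ik})v^k = \tr\tilde T(v)$. You instead pass to $T^*Q$, split $X_{\tilde\nabla} = X_H + Z$ with $X_H$ the kinetic Hamiltonian flow, dispose of $\mathrm{div}(X_H)$ by Liouville, and isolate the contorsion contribution. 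Your decomposition makes the parallel with the nonholonomic argument explicit and cleanly separates the ``Hamiltonian'' from the ``torsional'' part of the flow; the paper's direct calculation is more self-contained and never needs to name the contorsion tensor.

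Two small points to tighten. First, the constant in your divergence identity should be $+1$ rather than $-n$: the paper's own computation here yields simply $\mathrm{div}_\Omega(X)=\tr\tilde T(v)$, and the factor $-n$ from the nonholonomic formula does not reappear. This is harmless for the exactness conclusion but worth correcting. Second, the mechanism that closes your computation is not quite ``the totally symmetric part of $K$ cancels.'' What you actually obtain is $\mathrm{div}(Z) = -(K^{j}_{\ jl}+K^{j}_{\ lj})v^l$, and the key step is that the second trace vanishes: $K^{j}_{\ lj}=g^{ji}K_{ilj}=0$, because metric compatibility forces $K_{ilj}=-K_{jli}$ (antisymmetry in the first and third lowered indices), and this pairs against the symmetric $g^{ji}$. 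With that in hand, $\mathrm{div}(Z)=-K^{j}_{\ jl}v^l=\tr\tilde T(v)$, and your cohomology argument goes through exactly as you describe.
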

\begin{proof}
	Consider the volume form on $TQ$ given by
	\begin{equation*}
	\Omega = \det g \cdot dx^1\wedge\ldots\wedge dx^n\wedge dv^1\wedge\ldots\wedge dv^n.
	\end{equation*}
	We want to compute $\mathcal{L}_X\Omega$ where $X$ is the geodesic spray given by
	\begin{equation*}
	X = v^i \frac{\partial}{\partial x^i} - \Gamma^i_{jk}v^jv^k\frac{\partial}{\partial v^i}.
	\end{equation*}
	The Lie derivative is then
	\begin{equation*}
	\begin{split}
	\mathcal{L}_X\Omega &= di_X\Omega \\
	&= \left(d\left[ \det g\right](v) - \det g \left(\Gamma_{ik}^i+\Gamma_{ki}^i\right)v^k\right) \cdot
	\frac{1}{\det g} \Omega,
	\end{split}
	\end{equation*}
	and therefore the divergence is given by
	\begin{equation}\label{eq:torsion_div}
	\mathrm{div}_\Omega(X) = d\left[\ln\det g\right](v) - \left(\Gamma_{ik}^i+\Gamma_{ki}^i\right) v^k.
	\end{equation}
	We will now use the fact that the connection is compatible with the metric:
	\begin{equation*}
	\frac{\partial g_{jk}}{\partial x^i} = g_{\ell k}\Gamma^\ell_{ij} + g_{j\ell}\Gamma_{ik}^\ell.
	\end{equation*}
	This implies that
	\begin{equation*}
	\begin{split}
	g^{jk}\frac{\partial g_{jk}}{\partial x^i} &= g^{jk}g_{\ell k}\Gamma_{ij}^\ell + g^{jk}g_{j\ell}\Gamma_{ik}^\ell \\
	&= \delta_{\ell}^j\Gamma_{ij}^\ell + \delta_{\ell}^k\Gamma_{ij}^\ell = 
	2\Gamma_{ik}^k.
	\end{split}
	\end{equation*}
	Integrating the left-hand side above gives
	\begin{equation}\label{eq:compatable}
	d\left[ \ln\det g\right](v) = 2\Gamma_{ki}^i v^k.
	\end{equation}
	Substituting \eqref{eq:compatable} into \eqref{eq:torsion_div}, we get
	\begin{equation*}
	\mathrm{div}_\Omega(X) = \left( \Gamma_{ki}^i - \Gamma_{ik}^i\right) v^k.
	\end{equation*}
	It remains to show that this is the trace of the torsion. Indeed, 
	\begin{gather*}
	\tilde{T} = \left( \Gamma_{ij}^k - \Gamma_{ji}^k\right) \frac{\partial}{\partial x^k}\otimes dx^i \otimes dx^j \\
	\implies \tr \tilde{T} = \left(\Gamma_{ki}^i - \Gamma_{ik}^i \right) dx^k.
	\end{gather*}
	We conclude that 
	\begin{equation*}
	\mathrm{div}_\Omega(X) = \tr\tilde{T}(v).
	\end{equation*}
\end{proof}
This shows that a way to interpret the torsion of a connection is by measuring how much the geodesic spray fails to preserve volume.

\section{Examples}\label{sec:examples}

We end this work with applying both Theorem \ref{th:main_proved} and Theorem \ref{th:affine_basic} to various nonholonomic systems by calculating the general divergence for nonlinear systems or $\vartheta_{\mathscr{C}}$ and $\zeta_\mathscr{C}$ for affine/linear systems.
Examples are taken from \cite{bloch2008nonholonomic,deLeon1997,monforte2004geometric}.
\subsection{Affine Constraints}
We begin by examining multiple nonholonomic systems subject to affine/linear constraints.
\subsubsection{The Chaplygin Sleigh}\label{sec:sleigh}
As an example of Theorem \ref{th:affine_basic}, we will prove that no invariant basic volumes exist for the Chaplygin sleigh.
The Chaplygin sleigh has the configuration space $Q = \mathrm{SE}_2$, the special Euclidean group, and the following Lagrangian
\begin{equation*}
	L = \frac{1}{2}\left( m\dot{x}^2 + m\dot{y}^2 + \left(I+ma^2\right)\dot{\theta}^2  - 2ma\dot{x}\dot{\theta}\sin\theta + 2ma\dot{y}\dot{\theta}\cos\theta \right),
\end{equation*}
where $(x,y)\in\mathbb{R}^2$ is the coordinate of the contact point, $\theta\in\mathrm{SO}_2$ is its orientation, $m$ is the sleigh's mass, $I$ is the moment of inertia about the center of mass, and $a$ is the distance from the center of mass to the contact point (cf. \S1.7 in \cite{bloch2008nonholonomic}).

The nonholonomic constraint is that the sleigh can only slide in the direction it is pointing and is given by
\begin{equation*}
	\dot{y}\cos\theta - \dot{x}\sin\theta = 0,
\end{equation*}
which corresponds to the 1-form $\eta = \left(\cos\theta\right)dy - \left(\sin\theta\right)dx$ and function $\xi = 0$.

To determine the existence of an invariant volume, we only need to compute $\vartheta_{\mathscr{C}}$ as $\zeta_\mathscr{C}=0$. The constraining vector field and 1-form are:
\begin{equation*}
	W = \frac{ma^2+I}{Im}\left[ \cos\theta \frac{\partial}{\partial y} - \sin\theta\frac{\partial}{\partial x} \right] - \frac{a}{I}\frac{\partial}{\partial\theta}, \quad \eta = (\cos\theta)dy - (\sin\theta)dx.
\end{equation*}
This gives us
\begin{equation*}
	\begin{split}
		\vartheta_{\mathscr{C}} &= \frac{1}{\eta(W)}\mathcal{L}_W\eta \\
		&= \frac{ma}{ma^2+I}\left[(\sin\theta)dy+(\cos\theta)dx\right].
	\end{split}
\end{equation*}
As a consequence of this, the divergence of the Chaplygin sleigh is given by
\begin{equation}\label{eq:divergence_sleigh}
	\mathrm{div}_{\mu_{\mathscr{C}}}(X_H^\mathcal{D}) = -\frac{3mav}{I+ma^2}, \quad
	v = \dot{x}\cos\theta + \dot{y}\sin\theta.
\end{equation}
We want to show that for any $\tilde{\eta}\in\Gamma(\mathcal{D}^0)$, $\vartheta_{\mathscr{C}}+\tilde{\eta}$ is not exact. Because there is only one constraint, it suffices to show that there does not exist a smooth $k$ such that $\vartheta_{\mathscr{C}}+k\cdot\eta$ is closed, i.e. it requires the following to be zero:
\begin{equation*}
	\begin{split}
		d\left( \vartheta_{\mathscr{C}} + k\cdot\eta\right) &= \frac{ma}{ma^2+I}\left[ (\cos\theta)d\theta\wedge dy - (\sin\theta)d\theta\wedge dx \right] \\
		&\quad + \left(\frac{\partial k}{\partial x}\cos\theta + \frac{\partial k}{\partial y}\sin\theta\right) dx\wedge dy \\
		&\quad + \left(\frac{\partial k}{\partial\theta}\cos\theta - k\sin\theta\right) d\theta\wedge dy \\
		&\quad -\left(\frac{\partial k}{\partial\theta}\sin\theta + k\cos\theta\right) d\theta\wedge dx.
	\end{split}
\end{equation*}
Separating the above, we need the following three to vanish:
\begin{equation}\label{eq:compatibility_sleigh}
	\begin{split}
		0 &= \frac{\partial k}{\partial x}\cos\theta + \frac{\partial k}{\partial y}\sin\theta, \\
		0 &= \frac{\partial k}{\partial\theta}\cos\theta - k\sin\theta + \frac{ma}{ma^2+I}\cos\theta, \\
		0 &= \frac{\partial k}{\partial\theta}\sin\theta + k\cos\theta + \frac{ma}{ma^2+I}\sin\theta.
	\end{split}
\end{equation}
The second two lines of \eqref{eq:compatibility_sleigh} are overdetermined for $k$ in the $\theta$-direction and are inconsistent (unless $a=0$ and we obtain the trivial solution $k\equiv 0$). Therefore, there does not exist a smooth $k$ such that $\vartheta_{\mathscr{C}}+k\cdot\eta$ is closed. We note that this is compatable with the known result that when $a=0$, no asymptotically stable dynamics occur.
\subsubsection{The Falling Rolling Disk}
The next example is that of the falling rolling disk whose configuration space is $Q = \mathrm{SE}_2\times S^1$. Its Lagrangian is
\begin{equation*}
	L = \frac{m}{2}\left[ \left(\xi - R\left(\dot\varphi\sin\theta+\dot\psi\right)\right)^2 + \eta^2\sin^2\theta + \left(\eta\cos\theta + R\dot\theta\right)^2\right],
\end{equation*}
where
\begin{equation*}
	\xi = \dot{x}\cos\varphi + \dot{y}\sin\varphi + R\dot\psi, \quad \eta = -\dot{x}\sin\varphi + \dot{y}\cos\varphi.
\end{equation*}
We will consider both the case of the rolling coin on a stationary table (linear constraints) and on a rotating table (affine constraints).
\paragraph{\textbf{Stationary Table}}
The case of the disk on a stationary table was studied in \cite{bbm_disk}.
The constraints are given by the vanishing of the following 1-forms:
\begin{equation}\label{eq:falling_disk}
	\begin{split}
		\eta^1 &= \cos\varphi\cdot dx + \sin\varphi\cdot dy + R\cdot d\psi, \\
		\eta^2 &= -\sin\varphi\cdot dx + \cos\varphi\cdot dy.
	\end{split}
\end{equation}
The corresponding dual vector fields are
\begin{equation*}
	\begin{split}
		W^1 &= \frac{1}{m}\cos\varphi\frac{\partial}{\partial x} + \frac{1}{m}\sin\varphi\frac{\partial}{\partial y} + \frac{R}{I}\frac{\partial}{\partial \psi}, \\
		W^2 &= \frac{J+mR^2}{Jm+m^2R^2\sin^2\theta}\left[ -\sin\varphi\frac{\partial}{\partial x} + \cos\varphi\frac{\partial}{\partial y} \right] - \frac{R\cos\theta}{Jm+m^2R^2\sin^2\theta}\frac{\partial}{\partial\theta}.
	\end{split}
\end{equation*}
Computing $\vartheta_{\mathscr{C}}$, we obtain
\begin{equation*}
	\vartheta_{\mathscr{C}} = -\frac{mR^2\sin(2\theta)}{J+mR^2\sin^2\theta},
\end{equation*}
which is exact. The resulting invariant volume is
\begin{equation*} 
		\frac{1}{J+mR^2\sin^2\theta}\mu_{\mathscr{C}}. 
\end{equation*}
\paragraph{\textbf{Rotating Table}}\label{ex:rotating_disk}
Although the case of the stationary table has been studied, the authors are unaware of any results for the case of a rotating table.
If the falling rolling disk is placed on a table with constant angular velocity $\Omega$, the constraints become affine with
\begin{equation*}
	\begin{split}
		\eta^1(v) + \xi^1 &= 0, \quad \xi^1 = \Omega\left( y\cos\varphi - x\sin\varphi\right) \\
		\eta^2(v) +\xi^2 &= 0, \quad \xi^2 = - \Omega(x\cos\varphi+y\sin\varphi)
	\end{split}
\end{equation*}
where $\eta^\alpha$ are from \eqref{eq:falling_disk}. The volume from the stationary table is still preserved as
\begin{equation*}
	\mathcal{L}_{W^1}\xi^1 = \mathcal{L}_{W^2}\xi^2 = 0.
\end{equation*}
\subsubsection{The Chaplygin Sphere}
We next consider the case of a non-homogeneous sphere rolling without slipping on a horizontal plane, both stationary and rotating. The center of mass of the sphere is located at its geometric center while its principal moments of inertia are distinct. This example has been studied by Chaplygin \cite{chaplygin1903} and, e.g. \cite{bbm2018,kozlov1985,schneider2002}.

The Lagrangian is the kinetic energy,
\begin{equation*}
	\begin{split}
		L &= \frac{1}{2}I_1 \left(\dot{\theta}\cos\psi + \dot{\varphi}\sin\psi\sin\theta\right)^2 + \frac{1}{2}I_2 \left( -\dot{\theta}\sin\psi + \dot{\varphi}\cos\psi\sin\theta\right)^2 \\
		&\quad + \frac{1}{2}I_3\left(\dot{\psi}+\dot{\varphi}\cos\theta\right)^2 + 
		\frac{1}{2}M\left(\dot{x}^2+\dot{y}^2\right),
	\end{split}
\end{equation*}
where it is assumed that the radius is 1.
\paragraph{\textbf{Stationary Table}}
When the table is stationary, the constraints are given by the two 1-forms
\begin{equation*}
	\begin{split}
		\eta^1 &= dx - \sin\varphi\cdot d\theta + \cos\varphi\sin\theta\cdot d\psi, \\
		\eta^2 &= dy + \cos\varphi\cdot d\theta + \sin\varphi\sin\theta \cdot d\psi.
	\end{split}
\end{equation*}
The density form is given by $\vartheta_{\mathscr{C}}=A\cdot d\theta + B\cdot d\psi$ where
\begin{equation*}
	\begin{split}
		A &= \frac{M\sin(2\theta)\left[J_1+J_2\sin^2\psi \right]} {
			2\left( 
			J_3 + J_4\sin^2\theta + J_5\sin^2\theta \sin^2\psi
			\right)}, \\
		B &= \frac{MJ_2\sin(2\psi)\sin^2\theta} {
			2\left( 
			J_3 + J_4\sin^2\theta + J_5\sin^2\theta \sin^2\psi
			\right)},
	\end{split}
\end{equation*}
and the constants $J_j$ are 
\begin{equation*}
	\begin{split}
		J_1 &= I_1I_2-I_1I_3+I_2M-I_3M, \\
		J_2 &= I_1I_3 - I_2I_3 + I_1M - I_2M, \\
		J_3 &= I_3M^2  + I_1I_2I_3 + I_1I_3M + I_2I_3M, \\
		J_4 &= I_2M^2 -I_3M^2 + I_1I_2M-I_1I_3M, \\
		J_5 &= I_1M^2 - I_2M^2 + I_1I_3M - I_2I_3M.
	\end{split}
\end{equation*}
The density form is exact and the resulting invariant volume is
\begin{equation*}
	\begin{gathered}
	\left(\frac{1+\beta}{1-\beta}\right)^{\frac{1}{2}} \cdot\cos\theta \cdot \left(J_3 + (J_3+J_4)\tan^2\theta\right)^{\frac{1}{2}}\mu_{\mathscr{C}}, \\
	\beta = \frac{J_5\sin^2\theta\sin^2\psi}{2J_3+2J_4\sin^2\theta + J_5\sin^2\theta\sin^2\psi}.
	\end{gathered}
\end{equation*}
\paragraph{\textbf{Rotating Table}}\label{ex:rotating_sphere}
When the table is rotating, the constraints becomes affine
\begin{equation*}
	\begin{split}
		\eta^1(v) + \xi^1 &= 0, \quad \xi^1 = \Omega y, \\
		\eta^2(v) + \xi^2 &= 0, \quad \xi^2 = -\Omega x.
	\end{split}
\end{equation*}
Notice that,
\begin{equation*}
	\mathcal{L}_{W^1}\xi^1 = \mathcal{L}_{W^2}\xi^2 = 0, \quad \mathcal{L}_{W^2}\xi^1 = -\mathcal{L}_{W^1}\xi^2 = \frac{\Omega}{M}.
\end{equation*}
As the matrix $(m_{\alpha\beta})$ is symmetric, we have that the product
\begin{equation*}
	m_{\alpha\beta}\mathcal{L}_{W^\alpha}\xi^\beta = 0.
\end{equation*}
Therefore, the Chaplygin sphere on a rotating table is volume-preserving with the same volume as in the stationary case.
\subsubsection{The M\"{o}bius Strip}\label{sec:mobius}
Theorem \ref{th:main_proved} does not require that $Q$ be orientable. Consider the M\"{o}bius strip immersed in $\mathbb{R}^3$ by
\begin{equation}\label{eq:mobius_immersion}
	\begin{split}
		x &= \left( 1 + v\cdot \cos\left( \frac{u}{2}\right)\right) \cdot \cos(u), \\
		y &= \left( 1 + v\cdot \cos\left(\frac{u}{2}\right)\right)\cdot \sin(u), \\
		z &= v\cdot\sin\left(\frac{u}{2}\right),
	\end{split}
\end{equation}
for $0\leq u\leq 2\pi$ and $-1/2<v<1/2$. The Euclidean metric pulled back to the M\"{o}bius strip is
\begin{equation*}
	g = \left( 4v\cos\left(\frac{u}{2}\right) + 2v^2\cos\left(\frac{u}{2}\right) + \frac{v^2}{4} + 2\right) du\otimes du + 2dv\otimes dv.
\end{equation*}
Notice that the above metric is for the double cover rather than the M\"{o}bius strip itself as a consequence of \eqref{eq:mobius_immersion} only being an immersion. However, the resulting equations of motion will be well-defined on the strip.
In two-dimensional systems, any single constraint is automatically holonomic which will make volume-preservation trivial. Let us ``thicken'' the strip by $w$ with resulting metric
\begin{equation*}
	g_{thick} = \left( 4v\cos\left(\frac{u}{2}\right) + 2v^2\cos\left(\frac{u}{2}\right) + \frac{v^2}{4} + 2\right) du\otimes du + 2dv\otimes dv + dw\otimes dw.
\end{equation*}
Consider the linear nonholonomic constraint
\begin{equation*}
	\eta = dv + \sin(u)\cdot dw, \quad W = \frac{1}{2}\frac{\partial}{\partial v} + \frac{1}{2}\sin(u)\frac{\partial}{\partial w}.
\end{equation*}
The density form is
\begin{equation*}
	\vartheta_\mathscr{C} = \frac{\sin(u)\cos(u)}{1+\sin^2(u)}\, du,
\end{equation*}
which is exact. This produces the invariant volume
\begin{equation*}
		\sqrt{ 1 + \sin^2(u)}\cdot \mu_{\mathscr{C}},
\end{equation*}
which is shown in Figure \ref{fig:mobius}.
\begin{figure}
	\centering
	\includegraphics[scale=0.6]{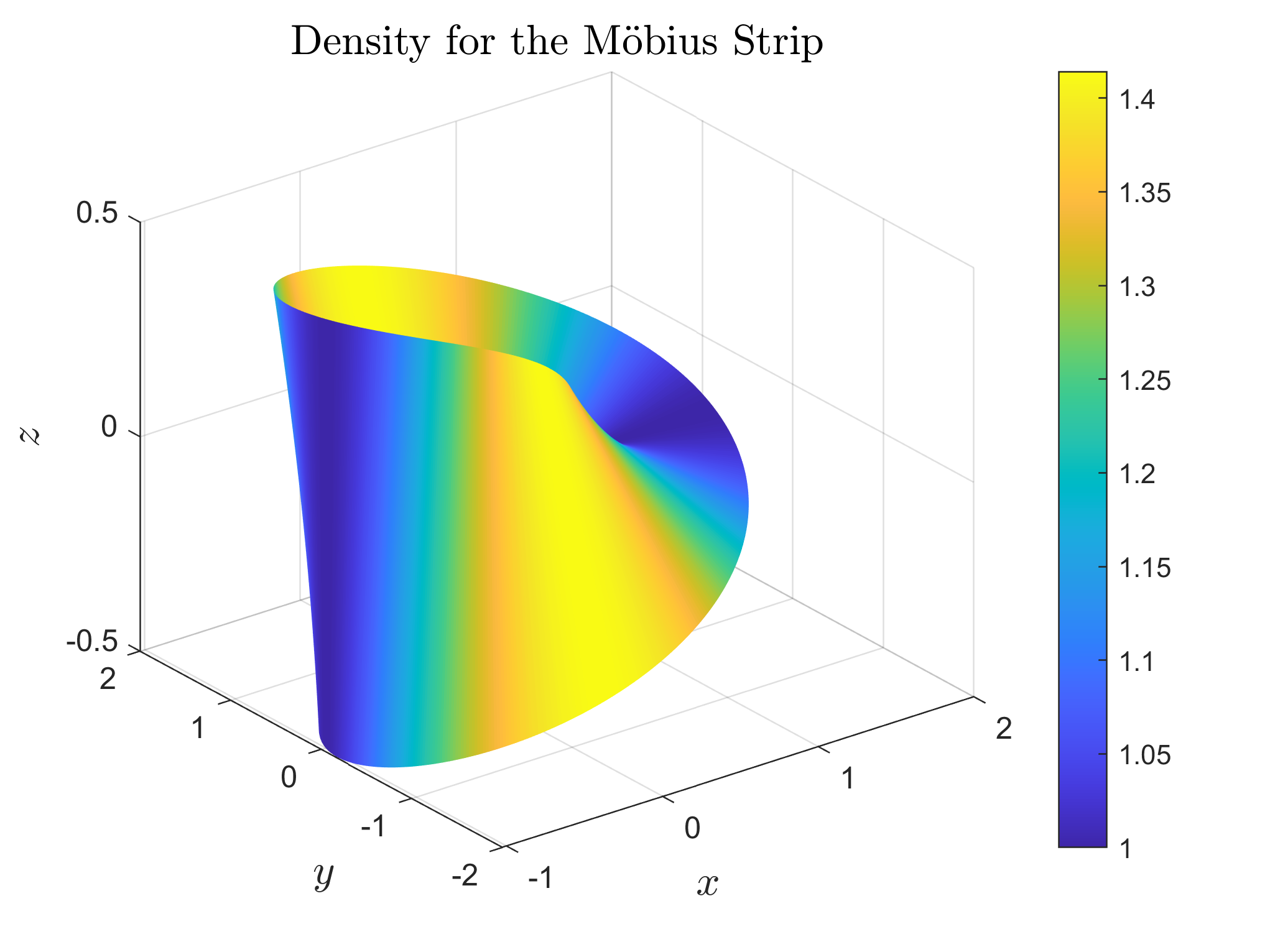}
	\caption{A plot of the density corresponding to the invariant volume for the nonholonomic system on the M\"{o}bius strip.}
	\label{fig:mobius}
\end{figure}
\subsection{Nonlinear Constraints}
The previous examples consisted of a natural Lagrangian and affine constraints which were describable by Theorem \ref{th:affine_basic}. The next two examples utilize Theorem \ref{th:main_proved} on systems with natural Lagrangians but nonlinear constraints. These examples can be found in \cite{deLeon1997,rojo2009}.
\subsubsection{Constant Kinetic Energy}\label{ex:constant_KE}
Let $L:T\mathbb{R}^3\to\mathbb{R}$ be the Lagrangian given by
\begin{equation*}
	L = \frac{1}{2}m\left( \dot{x}^2 + \dot{y}^2 + \dot{z}^2\right) - mgz,
\end{equation*}
subject to the nonlinear constraint of constant kinetic energy
\begin{equation*}
	\Psi = \dot{x}^2 + \dot{y}^2 + \dot{z}^2 - c = 0, \quad c>0.
\end{equation*}
Transferring to the Hamiltonian side, we have
\begin{equation*}
	H = \frac{1}{2m}\left( p_x^2 + p_y^2 + p_z^2\right) + mgz, \quad \Phi = \frac{1}{2}\left( p_x^2 + p_y^2 + p_z^2\right) - c = 0,
\end{equation*}
where $\Phi$ is normalized. As the Lagrangian is natural, \eqref{eq:natural_divergence} can be used to determine the divergence. The requisite data to compute the divergence is
\begin{equation*}
	\begin{split}
		\mathcal{C}^*d\Phi &= m\left( p_xdx + p_ydy + p_zdy\right), \\
		\mathcal{C}^*d\Phi(X_\Phi) &= m\left( p_x^2+p_y^2+p_z^2\right), \\
		\Delta_\mathcal{C}\Phi &= 3m, \\
		\phi &= \frac{1}{2m} - \frac{c}{m(p_x^2+p_y^2+p_z^2)}, \\
		\{H,\phi\} &= \frac{2gcp_z}{(p_x^2+p_y^2+p_z^2)}.
	\end{split}
\end{equation*}
Although $\mathcal{C}^*d\Phi(X_\Phi)\ne 0$ everywhere, it does in some tubular neighborhood of $M$.
The divergence of the system is given by
\begin{equation*}
	\begin{split}
		\mathrm{div}_{\mu_{\mathscr{C}}}(X_H^M) &= -3\cdot \mathcal{C}^*d\Phi\left( [X_H,X_\phi]\right) - 3\{H,\phi\}\mathcal{M}\\
		&= -\frac{18mgcp_z}{(p_x^2+p_y^2+p_z^2)^2} - \frac{18mgcp_z}{(p_x^2+p_y^2+p_z^2)^2} \\
		&= -\frac{9mgp_z}{c},
	\end{split}
\end{equation*}
where we used the fact that $p_x^2+p_y^2+p_z^2=2c$. The divergence does not vanish so $\mu_{\mathscr{C}}$ is not preserved. However, there exists an exact 1-form which produces the divergence:
\begin{equation*}
	\frac{9mg}{c}dz\left( X_H^M\right) = -\mathrm{div}_{\mu_{\mathscr{C}}}(X_H^M).
\end{equation*}
Therefore, the following volume-form is preserved
\begin{equation*}
	\exp\left( \frac{9mgz}{c} \right)\cdot \mu_{\mathscr{C}}.
\end{equation*}
\subsubsection{Appel's Example}
The other nonlinear constraint example we will examine is Appel's example.
The Lagrangian is the same as the constant kinetic energy case,
\begin{equation*}
	L = \frac{1}{2}m\left( \dot{x}^2+\dot{y}^2+\dot{z}^2\right) - mgz,
\end{equation*}
while the nonlinear constraint is now
\begin{equation*}
	\Psi = a^2(\dot{x}^2+\dot{y}^2) - \dot{z}^2 = 0.
\end{equation*}
The data on the Hamiltonian side becomes
\begin{equation*}
	H = \frac{1}{2m}\left( p_x^2+p_y^2 + p_z^2\right) + mgz, \quad \Phi = \frac{a^2}{2}\left(p_x^2+p_y^2\right) - \frac{1}{2}p_z^2 = 0,
\end{equation*}
where, again, the constraint is normalized.
As before, the Lagrangian is natural so \eqref{eq:natural_divergence} can be used. The requisite data is
\begin{equation*}
	\begin{split}
		\mathcal{C}^*d\Phi &= a^2mp_xdx + a^2mp_ydy - mp_zdz, \\
		\mathcal{C}^*d\Phi(X_\Phi) &= ma^4(p_x^2+p_y^2) +mp_z^2, \\
		\Delta_\mathcal{C}\Phi &= m(2a^2-1),\\
		\phi &= \frac{a^2(p_x^2+p_y^2)-p_z^2}{2ma^4(p_x^2+p_y^2)+2mp_z^2},\\
		\{H,\phi\} &= - \frac{a^2(a^2+1)gp_z(p_x^2+p_y^2)}{(a^4p_x^2+a^4p_y^2+p_z^2)^2}.
	\end{split}
\end{equation*}
The constraint is not admissible as $\mathcal{C}^*d\Phi(X_\Phi)$ vanishes at $p_x=p_y=p_z=0$ which is in the constraint manifold. As long as $a\ne 1$, this is the only place where this degeneracy occurs.
The divergence is
\begin{equation*}
	\begin{split}
		\mathrm{div}_{\mu_{\mathscr{C}}}(X_H^M) &= -3\cdot\mathcal{C}^*d\Phi\left( [X_H,X_\phi]\right) - 3\{H,\phi\}\mathcal{M} \\
		&= \frac{12a^2(a^2+1)mgp_z(p_x^2+p_y^2)(a^6p_x^2+a^6p_y^2-p_z^2)}{(a^4p_x^2+a^4p_y^2+p_z^2)^3}.
	\end{split}
\end{equation*}
To simplify the divergence, notice that the constraint makes $p_z^2 = a^2(p_x^2+p_y^2)$. Substituting this, the divergence becomes
\begin{equation*}
	\mathrm{div}_{\mu_{\mathscr{C}}}(X_H^M) = \frac{12(a^2-1)mg}{(a^2+1)p_z}.
\end{equation*}
The following exact 1-form solves the cohomology equation
\begin{equation*}
	\frac{12(a^2-1)}{a^2p_z} dp_z\left(X_H^M\right) = -\mathrm{div}_{\mu_{\mathscr{C}}}(X_H^M),
\end{equation*}
as
\begin{equation*}
	\dot{p}_z = -\frac{a^2mg}{1+a^2}.
\end{equation*}
Therefore, the following form is preserved
\begin{equation*}
	p_z^K\cdot\mu_{\mathscr{C}}, \quad K = \frac{12(a^2-1)}{a^2}.
\end{equation*}
Unfortunately, this form is not a volume-form as it vanishes when $p_z=0$.
\subsection{Non-Mechanical Lagrangians}
All of the examples examined so far have been for systems whose Lagrangian is natural. We conclude this work with two examples whose Lagrangian is not given by a Riemannian metric.
\subsubsection{Higher-Order Lagrangian}
Consider the Lagrangian
\begin{equation*}
	L = \frac{1}{4}\left( \dot{x}^4 + \dot{y}^4 + \dot{z}^4 \right),
\end{equation*}
subject to the nonintegrable constraint $\dot{z} = x\dot{y}$.
The Hamiltonian is
\begin{equation*}
	H = \frac{3}{4}\left( p_x^{4/3} + p_y^{4/3} + p_z^{4/3} \right),
\end{equation*}
and the constraint becomes
\begin{equation*}
	\Phi = p_z^{1/3} - xp_y^{1/3}.
\end{equation*}
Notice that singularities appear when the momentum vanishes. Continuing with the computations, we have
\begin{equation*}
	\begin{split}
		\mathcal{C}^*d\Phi &= dz - xdy \\
		\mathcal{C}^*d\Phi(X_\Phi) &= \frac{x^2}{3p_y^{2/3}} + \frac{1}{3p_z^{2/3}} \\
		\phi &= 3p_z - \frac{3x^2p_z + 3xp_y}{x^2 + \nu^{2/3}}, \quad \nu = \frac{p_y}{p_z} \\
		\Delta_\mathcal{C}\Phi &= 0 		
	\end{split}
\end{equation*}
Applying the constraint, the divergence simplifies to
\begin{equation*}
	\begin{split}
		\mathrm{div}_{\mu_\mathscr{C}}(X_H^M) &= -3p_x^{1/3}\cdot \frac{x^4-2}{x(1+x^4)} \\
		&= -3\frac{x^4-2}{x(1+x^4)}dx(X_H^M)
	\end{split}
\end{equation*}
The following form is exact
\begin{equation*}
	\alpha = \frac{3x^4-6}{x(1+x^4)} dx,
\end{equation*}
and an invariant form is
\begin{equation*}
	\frac{(1+x^4)^{9/4}}{x^6}\mu_\mathscr{C}.
\end{equation*}
Notice the singularity at $x=0$.
\subsubsection{Relativistic Lagrangian}
Suppose we have the relativistic Lagrangian
\begin{equation*}
	L = -m_0c^2\sqrt{ 1 - \frac{v^2}{c^2}}, \quad v^2 = \dot{x}^2 + \dot{y}^2 + \dot{z}^2.
\end{equation*}
With the same constraints $\dot{z} = x\dot{y}$. The Hamiltonian and constraint are
\begin{equation*}
	H = c\sqrt{ p^2 + m^2c^2}, \quad
	\Phi = \frac{c\left( p_z - xp_y\right)}{\sqrt{p^2+m^2c^2}}.
\end{equation*}
The computation yields
\begin{equation*}
	\begin{split}
		\mathcal{C}^*d\Phi &= dz - xdy \\
		\mathcal{C}^*d\Phi(X_\Phi) &= \frac{c\left( (m^2c^2+p_x^2) (1+x^2) +(xp_z + p_y)^2\right)}{\left( m^2c^2 + p^2\right)^{3/2}} \\
		\phi &= \frac{ (p^2+m^2c^2)(p_z - xp_y)}{(m^2c^2+p_x^2) (1+x^2) +(xp_z + p_y)^2} \\
		\Delta_\mathcal{C}\Phi &= 0
	\end{split}
\end{equation*}
Applying the constraints, the divergence simplifies to
\begin{equation*}
	\begin{split}
		\mathrm{div}_{\mu_\mathscr{C}}(X_H^M) &= \frac{3xp_x}{(1+x^2)\sqrt{m^2c^2 + p_x^2 + (1+x^2)p_y^2}} \\
		&= \frac{3x}{1+x^2}dx(X_H^M)
	\end{split}
\end{equation*}
Therefore, an invariant volume is given by
\begin{equation*}
	(1+x^2)^{-3/2}\mu_\mathscr{C}.
\end{equation*}


\section*{Acknowledgments}
We thank Dr. J.C. Marrero for pointing us to existing work in this field as well as the reviewers for helpful comments.


\medskip


\begin{thebibliography}{99}

\bibitem{abraham2008foundations} [10.1090/chel/364]
	\newblock R. Abraham and J.E. Marsden,
	\newblock \emph{Foundations of Mechanics},
	\newblock AMS Chelsea publishing. AMS Chelsea Pub./American Mathematical Society, 2008.

\bibitem{BaGa2012} [10.1007/s00205-012-0512-9]
	\newblock P. Balseiro and L. Garcia-Naranjo,
	\newblock Gauge transformations, twisted Poisson brackets and
        hamiltonization of nonholonomic systems
	\newblock \emph{Arch. Rational. Mech. Anal}, \textbf{205} (2012), 267-310.
	
\bibitem{bbm_disk}[10.1134/S1560354718060035]
	\newblock I.A. Bizyaev, A.V. Borisov, and I.S. Mamaev,
	\newblock An Invariant Measure and the Probability of a Fall in the Problem of an Inhomogeneous Disk Rolling on a Plane,
	\item \emph{Regular and Chaotic Dynamics}, \textbf{23} (2018), 665-684.

\bibitem{bbm2018} [10.1134/S1061920818040027]
	\newblock I.A. Bizyaev, A.V. Borisov, and I.S. Mamaev,
	\newblock Dynamics of the Chaplygin ball on a rotating plane
	\newblock \emph{Russian Journal of Mathematical Physics}, \textbf{25} (2018), 423-433.

\bibitem{BoBi2015}[10.1134/S1061920815040032]
	\newblock I.A. Bizyaev, A.V. Borisov, and I.S. Mamaev,
	\newblock Hamiltonization of elementary nonholonomic systems,
	\newblock \emph{Russian Journal of Mathematical Physics}, \textbf{22} (2015), 444-453.
	
\bibitem{blackall1941}[10.2307/2371286]
	\newblock C.J. Blackall,
	\newblock On volume integral invariants of non-holonomic dynamical systems,
	\newblock \emph{Am. J. Math.}, \textbf{63} (1941), 155--168.
	
\bibitem{bloch2008nonholonomic}[10.1007/978-1-4939-3017-3]
	\newblock A.M. Bloch, J. Baillieul, P. Crouch, J.E. Marsden, D. Zenkov, P.S. Krishnaprasad, and R.M. Murray,
	\newblock \emph{Nonholonomic Mechanics and Control}
	\newblock Springer New York, 2015

\bibitem{bloch2009quasivelocities}[10.1080/14689360802609344]
	\newblock A.M. Bloch, J.E. Marsden and D.V. Zenkov,
	\newblock Quasivelocities and symmetries in non-holonomic systems
	\newblock \emph{Dynamical systems}, \textbf{24} (2009), 187--222.
	
\bibitem{BoBoMa2011} [10.1134/S1560354711050030]
	\newblock A.V. Bolsonov, A.V.  Borisov and I.S. Mamaev
	\newblock Hamiltonization of nonholonomic systems in the neighborhood
        of invariant manifolds
	\newblock \emph{Reg. Chaotic Dynamics}, \textbf{15} (2011), 443-464.

\bibitem{AvBor2005}[10.1007/s11006-005-0085-0]
	\newblock A.V. Bolsinov and I.S. Mamaev,
	\newblock The Nonexistence of an Invariant Measure for an Inhomogeneous Ellipsoid Rolling on a Plane,
	\newblock \emph{Mathematical Notes}, \textbf{77} (2005), 855-857.

\bibitem{AvBor2013}[10.1134/S1560354713030064]
	\newblock A.V. Borisov, I.S. Mamaev, and I.A. Bizyaev,
	\newblock The Hierarchy of Dynamics of a Rigid Body Rolling without Slipping and Spinning on a Plane and a Sphere,
	\newblock \emph{Regular and Chaotic Dynamics}, \textbf{18} (2013), 266-328.

\bibitem{clarkthesis}
	\newblock W. Clark,
	\newblock \emph{Invariant Measures, Geometry, and Control of Hybrid and Nonholonomic Dynamical Systems},
	\newblock Ph.D. thesis, University of Michigan, 2020.

\bibitem{cantrijn2002}[10.1017/S0305004101005679]
	\newblock F. Cantrijn, and J. Cort\'{e}s, and M. de Le\'{o}n, and M. de Diego,
	\newblock On the geometry of generalized Chaplygin systems,
	\newblock \emph{Math. Proc. Camb. Phil. Soc.}, \textbf{132} (2002), 323--351.
	
\bibitem{chaplygin1903}
	\newblock S.A. Chaplygin,
	\newblock On a rolling of a sphere on a horizontal plane,
	\newblock \emph{Mathematical Collection of the Moscow Mathematical Society}, \textbf{24} (1903), 139--168 (Russian).
	
\bibitem{fgn2018}
	\newblock F. Fass\`{o}, L.C. Garc\'{i}a-Narango, and N. Sansonetto,
	\newblock Moving energies as first integrals of nonholonomic systems with affine constraints,
	\newblock \emph{Nonlinearity} \textbf{31} (2018), 755-782.

\bibitem{fedorovkozloz} (MR1351030) [10.1090/trans2/168]
	\newblock Y.N. Fedorov and V.V. Kozlov,
	\newblock Various aspects of n-dimensional rigid body dynamics,
	\newblock in \emph{Dynamical Systems in Classical Mechanics} (eds. V. V. Kozlov) American Mathematical Society Translations: Series 2, (1995), 141--171.
	
\bibitem{federovnaranjo}[10.1007/s00332-014-9227-4]
	\newblock Y.N. Federov, L.C. Garc\'{i}a-Naranjo, and J.C. Marrero,
	\newblock Unimodularity and Preservation of Volumes in Nonholonomic Mechanics,
	\newblock \emph{Journal of Nonlinear Science}, \textbf{25} (2015), 203--246.

\bibitem{fernandez2009hamiltonization}
	\newblock O.E. Fernandez,
	\newblock \emph{The {H}amiltonization of nonholonomic systems and its applications},
	\newblock Ph.D. thesis, University of Michigan, 2009.

\bibitem{forni1995cohomological}[10.1090/S1079-6762-95-03005-8]
	\newblock G. Forni,
	\newblock The cohomological equation for area-preserving flows on compact surfaces,
	\newblock \emph{Electronic Research Announcements of the American Mathematical Society}, \textbf{1} (1995), 114--123.
	
\bibitem{nonbasicaffine}[10.1016/j.physleta.2014.06.026]
	\newblock L.C. Garc\'{i}a-Naranjo, and A.J. Maciejewski, and J.C. Marrero, and M. Przybylska,
	\newblock The inhomogeneous {S}uslov problem,
	\newblock \emph{Physics Letters A}, \textbf{378} (2014), 2389--2395.
	
\bibitem{nonexistenceellipsoid}[10.1134/S1560354713040047]
	\newblock L.C. Garc\'{i}a-Naranjo, and J.C. Marrero,
	\newblock Non-existence of an invariant measure for a homogeneous ellipsoid rolling on the plane,
	\newblock \emph{Regular and Chaotic Dynamics}, \textbf{25} (2013), 372--379.
	
\bibitem{garcia2020}[10.1088/1361-6544/ab5c0a]
	\newblock L.C. Garc\'{i}a-Naranjo, and J.C. Marrero,
	\newblock The geometry of nonholonomic Chaplygin systems revisited,
	\newblock \emph{Nonlinearity}, \textbf{33} (2020), 1297--1341.

\bibitem{ILIYEV1985295}[10.1016/0021-8928(85)90026-7]
	\newblock I. Iliyev,
	\newblock On the conditions for the existence of the reducing chaplygin factor,
	\newblock \emph{Journal of Applied Mathematics and Mechanics}, \textbf{49} (1985), 295--301.
	
\bibitem{isidori1995}[10.1007/978-1-84628-615-5]
	\newblock A. Isidori,
	\newblock \emph{Nonlinear Control Systems},
	\newblock Springer-Verlag London. Communications and Control Engineering, 1995.
	
\bibitem{Jovanovic_1998}[10.1088/0305-4470/31/5/011]
	\newblock B. Jovanovic,
	\newblock Non-holonomic geodesic flows on Lie groups and the integrable {S}uslov problem on {SO}(4),
	\newblock \emph{Journal of Physics A: Mathematical and General}, \textbf{31} (1998), 1415--1422.
	
\bibitem{Jovanovic_2019}[10.2298/TAM190322003J]
	\newblock B. Jovanovic,
	\newblock Note on a ball rolling over a sphere: integrable Chaplygin system with an invariant measure without Chaplygin Hamiltonization,
	\newblock \emph{Theoretical and Applied Mechanics}, \textbf{46} (2019), 97--108.

\bibitem{katok1995introduction}[10.1017/CBO9780511809187]
	\newblock A. Katok and B. Hasselblatt,
	\newblock \emph{Introduction to the Modern Theory of Dynamical Systems},
	\newblock Cambridge University Press. Encyclopedia of Mathematics and its Applications, 1995.

\bibitem{Koiller1992}[10.1007/BF00375092]
	\newblock J. Koiller,
	\newblock Reduction of some classical non-holonomic systems with symmetry,
	\newblock \emph{Archive for Rational Mechanics and Analysis}, \textbf{118} (1992), 113--148.

\bibitem{KOON199721}[10.1016/S0034-4877(97)85617-0]
	\newblock W. S. Koon and J. E. Marsden,
	\newblock The {H}amiltonian and {L}agrangian approaches to the dynamics of nonholonomic systems,
	\newblock \emph{Reports on Mathematical Physics}, \textbf{40} (1997), 21--62.
	
\bibitem{kozlov1985}
	\newblock V.V. Kozlov,
	\newblock On the integration theory of the equations in nonholonomic mechanics,
	\newblock \emph{Advances in Mechanics}, \textbf{8} (1985), 86--107.
	
\bibitem{kozlov1988}[10.1007/BF01077727]
	\newblock V.V. Kozlov,
	\newblock Invariant measures of Euler-Poincaré equations on Lie algebras,
	\newblock \emph{Functional Analysis and Its Applications}, \textbf{22} (1988), 58--59.
	
\bibitem{VVKo1988}[10.1070/RD2002v007n02ABEH000203]
	\newblock V.V. Kozlov,
	\newblock On the Integration Theory of Equations of Nonholonomic Mechanics,
	\newblock \emph{Regular and Choatic Dynamics}, \textbf{7} (2002), 161--176.
	
\bibitem{deLeon1997}[10.1007/BF02435796]
	\newblock M. de Le\'{o}n, J.C. Marrero, and D.M. de Diego,
	\newblock Mechanical systems with nonlinear constraints,
	\newblock \emph{International Journal of Theoretical Physics}, \textbf{36} (1997), 979--995.
	
\bibitem{livvsic1972cohomology}[10.1070/im1972v006n06abeh001919]
	\newblock A.N. Liv\v{s}ic,
	\newblock Cohomology of dynamical systems,
	\newblock \emph{Mathematica of the USSR-Izvestiya}, \textbf{6} (1972), 1278--1301.
	
\bibitem{marle}[10.1016/S0034-4877(98)80011-6]
	\newblock C.M. Marle,
	\newblock Various approaches to conservative and nonconservative nonholonomic systems,
	\newblock \emph{Reports on Mathematical Physics}, \textbf{42} (1998), 211-229.
	
\bibitem{marreropoisson} [10.3934/jgm.2010.2.243]
	\newblock J.C. Marrero,
	\newblock Hamiltonian mechanical systems on Lie algebroids, unimodularity and preservation of volumes,
	\newblock \emph{Journal of Geometric Mechanics}, \textbf{2} (2010), 243--263.
	
\bibitem{milnor}[10.1016/S0001-8708(76)80002-3]
	\newblock J. Milnor,
	\newblock Curvatures of left invariant metrics on lie groups,
	\newblock \emph{Advances in Mathematics}, \textbf{21} (1976), 293-329.
	
\bibitem{molina2010equations}
	\newblock M. Molina-Becerra, E. Freire, and J. Vioque,
	\newblock Equations of motion of nonholonomic {H}amiltonian systems
	\newblock Preprint obtained from http://www. matematicaaplicada2. es/data/pdf/1276179170\_1811485430. pdf

\bibitem{monforte2004geometric}[10.1007/b84020]
	\newblock J.C. Monforte,
	\newblock \emph{Geometric, control and numerical aspects of nonholonomic systems},
	\newblock Springer-Verlag Berlin Heidelberg, 2004.

\bibitem{neimark1972dynamics}
	\newblock J.I. Neimark and N.A. Fufaev,
	\newblock \emph{Dynamics of Nonholonomic Systems},
	\newblock American Mathematical Society. Translations of mathematical monographs, 1972.
	
\bibitem{rojo2009}[10.1103/PhysRevE.80.025601]
	\newblock A.G. Rojo and A.M. Bloch,
	\newblock Nonholonomic double-bracket equations and the Gauss thermostat,
	\newblock \emph{Phys. Rev. E}, \textbf{80} (2009).

\bibitem{ruina1998}[10.1016/S0034-4877(98)80006-2]
	\newblock A. Ruina,
	\newblock Nonholonomic stability aspects of piecewise holonomic systems,
	\newblock \emph{Reports on Mathematical Physics}, \textbf{42} (1998), 91--100.
	
\bibitem{schneider2002}[10.1080/02681110110112852]
	\newblock D. Schneider,
	\newblock Nonholonomic Euler-Poincar\'{e} equations and stability in Chaplygin's sphere,
	\newblock \emph{Dynamical Systems}, \textbf{17} (2002), 87--130.

\bibitem{VANDERSCHAFT1994225}[10.1016/0034-4877(94)90038-8]
	\newblock A.J. Van Der Schaft and B.M. Maschke,
	\newblock On the {H}amiltonian formulation of nonholonomic mechanical systems
	\newblock \emph{Reports on Mathematical Physics}, \textbf{34} (1994), 225--233

\bibitem{Vershik}[10.1142/9789812815453_0014]
	\newblock A.M. Vershik  and  L.D. Faddeev,
	\newblock Lagrangian Mechanics in Invariant Form,
	\newblock \emph{Selecta Math. Sov.}, \textbf{4} (1981), 339--350.

\bibitem{weinstein_poisson}[10.1016/S0393-0440(97)80011-3]
	\newblock  A.  Weinstein,
	\newblock The modular automorphism group of a Poisson manifold,
	\newblock \emph{Journal of Geometry and Physics}, \textbf{23} (1997), 379--394.

\bibitem{YoMo2020} [10.1063/1.5145218]
	\newblock A. Yoshida and P. Morrison,
	\newblock \emph{Deformation of Lie-Poisson algebra and chirality},
	\newblock \emph{J. Mathematical Physics}, \textbf{61} (2020), 092901.
        
\bibitem{zenkov2003invariant}[10.1088/0951-7715/16/5/313]
	\newblock D.V. Zenkov and A.M. Bloch,
	\newblock Invariant measures of nonholonomic flows with internal degrees of freedom,
	\newblock \emph{Nonlinearity}, \textbf{16} (2003), 1793--1807.
	
\bibitem{zenkov1997}[10.1080/02681119808806257]
	\newblock D.V. Zenkov, A.M. Bloch, and J.E. Marsden,
	\newblock The Energy-Momentum Method for the Stability of Nonholonomic Systems,
	\newblock \emph{Dynamics and Stability of Systems}, \textbf{13} (1998), 123--165.



%
%
%
%
%
%
%
%
%
%
%

\end{thebibliography}
\end{document}